\newcommand{\w}{\omega}
\newcommand{\e}{\varepsilon}
\newcommand{\IN}{\mathbb N}
\newcommand{\A}{\mathcal A}
\newcommand{\IR}{\mathbb R}
\newcommand{\IQ}{\mathbb Q}
\newcommand{\F}{\mathcal F}
\newcommand{\Ra}{\Rightarrow}
\newcommand{\K}{\mathcal K}
\newcommand{\U}{\mathcal U}
\newcommand{\I}{\mathcal I}
\newcommand{\N}{\mathcal N}
\newcommand{\nw}{\mathrm{nw}}
\newcommand{\Pyt}{\mathfrak P}
\newcommand{\PP}{\mathcal P}
\newcommand{\cs}{\mathrm{cs}}
\newcommand{\cbox}{\boxdot}
\newcommand{\pr}{\mathrm{pr}}
\newcommand{\id}{\mathrm{id}}
\newcommand{\cl}{\mathrm{cl}}
\newtheorem{theorem}{Theorem}[section]
\newtheorem{proposition}[theorem]{Proposition}
\newtheorem{claim}[theorem]{Claim}
\newtheorem{example}[theorem]{Example}
\newtheorem{corollary}[theorem]{Corollary}
\newtheorem{lemma}[theorem]{Lemma}
\theoremstyle{definition}
\newtheorem{definition}[theorem]{Definition}
\newtheorem{remark}[theorem]{Remark}
\title{The strong Pytkeev property in topological spaces}
\author{Taras Banakh, Arkady Leiderman}
\keywords{The strong Pytkeev property, function space with compact-open topology, topological group, topological loop, topological lop, rectifiable space}
\subjclass{54E20; 54C35; 22A30}
\address{T.Banakh: Department of Mathematics, Ivan Franko National University of Lviv (Ukraine) and\newline Instytut Matematyki, Jan Kochanowski University in Kielce (Poland)}
\email{t.o.banakh@gmail.com}
\address{A.Leiderman: Department of Mathematics, Ben-Gurion University of the Negev, Beer Sheva, Israel}
\email{arkady@math.bgu.ac.il}
\begin{document}
\begin{abstract}
A topological space $X$ has the strong Pytkeev property at a point $x\in X$ if there exists a countable family $\mathcal N$ of subsets of $X$ such that for each neighborhood $O_x\subset X$ and subset $A\subset X$ accumulating at $x$, there is a set $N\in\mathcal N$ such that $N\subset O_x$ and $N\cap A$ is infinite. We prove that for any $\aleph_0$-space $X$ and any space $Y$ with the strong Pytkeev property at a point $y\in Y$ the function space $C_k(X,Y)$ has the strong Pytkeev property at the
constant function $X\to \{y\}\subset Y$. If the space $Y$ is rectifiable, then the function space $C_k(X,Y)$ is rectifiable and has the strong Pytkeev property at each point. We also prove that for any pointed spaces $(X_n,*_n)$, $n\in\w$, with the strong Pytkeev property their Tychonoff product $\prod_{n\in\w}X_n$ and their small box-product $\cbox_{n\in\w}X_n$ both have the strong Pytkeev property at the distinguished point $(*_n)_{n\in\w}$. We prove that a sequential rectifiable space $X$ has the strong Pytkeev property if and only if $X$ is metrizable or contains a clopen submetrizable $k_\omega$-subspace. A locally precompact topological group is metrizable if and only if it contains a dense subgroup with the strong Pytkeev property.
\end{abstract}
\maketitle

This paper is devoted to a systematic study of the strong Pytkeev property in topological spaces.
The strong Pytkeev property was introduced by Tsaban and Zdomskyy in \cite{TZ} and is a hereditary version of the Pytkeev property introduced by Pytkeev in \cite{Pyt} and studied in \cite{BM}, \cite{FlD}, \cite{Koc}, \cite{MT}, \cite{PP}, \cite{MSak2}, \cite{MSak06},  \cite{ST}. In Section~\ref{s1} of this paper we discuss the interplay between the strong Pytkeev property and other known local properties of topological spaces, in Sections~\ref{s2} detect function spaces with the strong Pytkeev property, in Section~\ref{s3} we apply the results of Section~\ref{s2} to establish the stability the class of topological spaces with the strong Pytkeev under countable Tychonoff products and small box-products. Section~\ref{s4} is devoted to rectifiable spaces and their topologo-algebraic counterparts called topological lops.  In that section we prove that the strong Pytkeev property is preserved by taking function spaces with values in rectifiable spaces. It should be mentioned that the class of rectifiable spaces contains all spaces homeomorphic to topological groups or topological loops. In Section~\ref{s5} we characterize (sequential) rectifiable spaces with the strong Pytkeev property and in final Section~\ref{s6} we establish some metrizability criteria for countably compact subsets of rectifiable spaces with the strong Pytkeev property.

\section{The interplay between the strong Pytkeev property and other local topological properties}\label{s1}

The central notion of this paper is the strong Pytkeev property introduced by Tsaban and Zdomskyy in \cite{TZ} and studied in \cite{MSak2} and \cite{GKL}.

\begin{definition} A topological space $X$ is defined to have {\em the strong Pytkeev property at} a point $x\in X$ if there is a countable family $\mathcal N$ of subsets of $X$ (called a {\em Pytkeev network at} $x$) such that for any neighborhood $O_x\subset X$ of $x$ and any subset $A\subset X$ accumulating at $x$ there is a set $N\in\mathcal N$ such that $N\subset O_x$ and the intersection $A\cap N$ is infinite.
\end{definition}

 A subset $A\subset X$ {\em accumulates} at a point $x\in X$ if each neighborhood $O_x\subset X$ contains infinitely many points of the set $A$. If $X$ is a $T_1$-space, then $A$ accumulates at $x$ if and only if $x$ belongs to the closure of the set $A\setminus \{x\}$ in $X$.

We say that a topological space $X$ has {\em the strong Pytkeev property} if it has the strong Pytkeev property at each point $x\in X$.  As we said, the strong Pytkeev property was introduced by Tsaban and Zdomskyy \cite{TZ} and afterward was studied by many authors \cite{MSak2}, \cite{GKL}. It is clear that this property belongs to local topological properties.
So, now we analyze the relation of the strong Pytkeev property to other known local properties, whose definitions we recall now.

\begin{definition} Let $X$ be a topological space and $x\in X$. We say that $X$ has
\begin{itemize}
\item {\em the Pytkeev property} at $x$ if for any subset $A\subset X$ accumulating at $x$ there is a countable family $\mathcal N$ of infinite subsets of $X$ (called a {\em Pytkeev $\pi$-network} in $A$ at $x$) such that each neighborhood $O_x\subset X$ contains some set $N\in\mathcal N$;
\item {\em countable $\cs$-character at} $x$ if there is a countable family $\mathcal N$ of subsets of $X$ (called a {\em $\cs$-network at} $x$) such that for every neighborhood $O_x\subset X$ and sequence convergent to $x$ there is a set $N\in\mathcal N$ contained in the neighborhood $O_x$ and containing all but finitely many points of the sequence;
\item {\em countable $\cs^*$-character at} $x$ if there is a countable family $\mathcal N$ of subsets of $X$ (called a {\em $\cs^*$-network at} $x$) such that for every neighborhood $O_x\subset X$ and sequence convergent to $x$ there is a set $N\in\mathcal N$ contained in the neighborhood $O_x$ and containing infinitely many points of the sequence;
\item {\em countable $k$-character at} $x$ if there is a countable family $\mathcal N$ of subsets of $X$ (called a {\em local $k$-network at} $x$) such that for every neighborhood $O_x\subset X$ there is a neighborhood $U_x\subset X$ of $x$ such that for every compact subset $K\subset U_x$ there is a finite subfamily $\F\subset\mathcal N$ such that $K\subset\bigcup\F\subset O_x$;
\smallskip

\item {\em countable tightness at} $x$ if each subset $A\subset X$ with $a\in \cl_X(A)$ contains a countable subset $B\subset A$ such that $x\in \cl_X(B)$;
\item {\em countable} ({\em open}) {\em fan tightness} at $x$ if for every sequence of (open) sets $A_n\subset X$, $n\in\w$,  with $x\in\bigcap_{n\in\w}\bar A_n$ there are finite sets $F_n\subset A_n$, $n\in\w$, such that each neighborhood $O_x\subset X$ meets infinitely many sets $F_n$, $n\in\w$;
\item {\em Fr\'echet-Urysohn property at} a $x$ if each subset $A\subset X$ with $x\in A$ contains a sequence convergent to $x$.
\end{itemize}
\end{definition}

\begin{remark} The Pytkeev property was introduced by Pytkeev in \cite{Pyt} and afterwards studied by many authors. It is known (see \cite[p.208]{MSak06}) that each Pytkeev space has countable tightness. Spaces with countable $\cs$-character and countable $\cs^*$-character were introduced and studied by Banakh and Zdomskyy in \cite{BZd}. They proved that these two notions are equivalent (observing that for any countable $\cs^*$-network $\mathcal N$ at $x$ the family $\widetilde{\mathcal N}=\big\{\bigcup\F:\F\subset\N,\;|\F|<\w\big\}$ is a countable $\cs$-network at $x$). The countable tightness, and the Fr\'echet-Urysohn properties are classical topological notions. The countable fan  tightness appears naturally in the theory of Function Spaces (see \cite[\S II.2]{Arh}) while its ``open'' modification was introduced by M.~Sakai \cite{MSak2} as the property $(\#)$. The notion of local $k$-network is a modification of a well-known notion of a $k$-network. It was introduced in the initial version of this paper and has been used by Gabriyelyan and K\c akol in \cite{GK1}, \cite{GK2}.
\end{remark}

We recall that a topological space $X$ is
\begin{itemize}
\item {\em sequential} if for each non-closed subset $A\subset X$ there is a sequence $\{a_n\}_{n\in\w}\subset A$ convergent to a point $x\in X\setminus A$;
\item {\em subsequential} if $X$ is a subspace of a sequential $T_1$-space.
\end{itemize}
In \cite{Pyt} Pytkeev proved that each subsequential space is Pytkeev.

The interplay between the strong Pytkeev property and some other topological properties is described in the following diagram:
$$\xymatrix{
\mbox{first countable}\ar@{=>}[r]&\mbox{strong Pytkeev}\ar@{=>}[r]\ar@{=>}[d]&\mbox{countable $k$-character}\ar@{=>}[r]&\mbox{countable $\cs^*$-character}\\
\mbox{subsequential}\ar@{=>}[r]&\mbox{Pytkeev}\ar@{=>}[r]&\mbox{countably tight}.
}
$$

The unique non-trivial implication from the upper line in this diagram is proved in the following proposition.


\begin{proposition}\label{vazlyvo} If a topological space $X$ has the strong Pytkeev property at a point $x\in X$, then $X$ has countable $k$-character at $x$.
\end{proposition}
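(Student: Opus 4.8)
The plan is to show that the Pytkeev network at $x$, after closing it under finite unions and adjoining the singleton $\{x\}$, is itself a local $k$-network at $x$. So let $\N$ be a Pytkeev network at $x$; since enlarging a Pytkeev network keeps it a Pytkeev network, I may assume $\{x\}\in\N$ and that $\N$ is closed under finite unions. Fixing a neighborhood $O_x$, put $\mathcal M=\{N\in\N:N\subseteq O_x\}$; this is a countable family closed under finite unions, so I can extract a cofinal increasing chain $N_0\subseteq N_1\subseteq\cdots$ in $\mathcal M$. Because $\bigcup\F\in\N$ for every finite $\F\subseteq\N$, the defining property of a local $k$-network reduces, for this $O_x$, to finding a neighborhood $U_x$ of $x$ such that every compact $K\subseteq U_x$ satisfies $K\subseteq N_k$ for some $k$. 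I would argue this by contradiction.

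Assume no such $U_x$ exists. Then for every neighborhood $U\subseteq O_x$ of $x$ I may choose a compact $K_U\subseteq U$ with $K_U\not\subseteq N_k$ for all $k$, and hence points $p_{U,k}\in K_U\setminus N_k$. Writing $P_U=\{p_{U,k}:k\in\w\}$, monotonicity of $(N_k)$ forces $P_U\cap N_j\subseteq\{p_{U,k}:k<j\}$, so $P_U$ meets each $N_j$ in a finite set. A short preliminary dichotomy guarantees the $P_U$ may be taken infinite: writing $W=\bigcup_k N_k$, if $x\in\cl(O_x\setminus W)$ then $O_x\setminus W$ already accumulates at $x$ inside $O_x$ while missing every $N_k$, contradicting the strong Pytkeev property outright; otherwise $x\in\mathrm{int}(W)$, and restricting to neighborhoods $U\subseteq\mathrm{int}(W)$ keeps $P_U\subseteq W$, whence $P_U$ is infinite. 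Now set $A=\bigcup_U P_U$. For any neighborhood $V$ (inside $\mathrm{int}(W)$) the infinite set $P_V\subseteq K_V\subseteq V$ lies in $A$, so $A$ accumulates at $x$; and since $P_V\setminus N_k$ is cofinite in $P_V$, hence infinite, the same computation shows that $A\setminus N_k$ accumulates at $x$ for every $k$.

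The heart of the argument is to pass from $A$ to a single countable set $B\subseteq A$ that still accumulates at $x$ yet meets every $N_k$ in a finite set; granting this, the strong Pytkeev property applied to $B$ and $O_x$ returns some $N\in\N$ with $N\subseteq O_x$ and $N\cap B$ infinite, and since $N\subseteq N_k$ for some $k$ this contradicts $|B\cap N_k|<\w$. This extraction is the main obstacle, and it is genuinely delicate, because accumulation at $x$ forces $B$ into every neighborhood while the escaping condition $|B\cap N_k|<\w$ tightly limits how many points of $B$ can lie at each level $N_k$. For an arbitrary increasing family the two demands are incompatible — in the sequential fan $S_\w$ one checks directly that no set accumulating at the apex can meet each initial union of convergent sequences in a finite set — so mere countable tightness cannot suffice and the construction must use that each $P_U$ sits inside the compact set $K_U$. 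Accordingly I would combine the countable tightness at $x$ (available since the strong Pytkeev property implies the Pytkeev property and hence countable tightness) with the compactness of the $K_U$, which supplies in each neighborhood a cluster point of an escaping sequence, and then run a diagonalization over the levels $(N_k)$ against a countable tightness witness. Checking that this diagonal simultaneously preserves accumulation at $x$ and the finiteness of every $B\cap N_k$ is the technical core, and the step I expect to require the most care.
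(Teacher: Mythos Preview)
Your plan is the paper's proof: the same Pytkeev network (closed under finite unions), the same contradiction setup, the same reduction to a neighborhood contained in $W=\bigcup_k N_k$, and the same two ingredients---countable tightness at $x$ and compactness of the $K_U$---for the extraction step. You have also correctly isolated the only nontrivial point: producing a \emph{countable} $B\subseteq A$ that still accumulates at $x$ while meeting every $N_k$ finitely. What is missing is the actual construction; you describe it only as ``a diagonalization'' and flag it as the step requiring care. Here is how the paper carries it out, in your notation.

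For each admissible neighborhood $U$ the infinite set $P_U=\{p_{U,k}:k\in\w\}$ lies in the compact set $K_U$, so it has an accumulation point $q_U\in K_U\subseteq U$. Hence $x$ lies in the closure of $\{q_U:U\}$, and countable tightness at $x$ yields neighborhoods $U_0,U_1,\dots$ with $x\in\cl\{q_{U_n}:n\in\w\}$. Now put
\[
B=\{\,p_{U_n,k}:n\in\w,\ k\ge n\,\}.
\]
The triangular restriction $k\ge n$ is the whole trick. Since $p_{U_n,k}\notin N_k$ and the chain $(N_j)$ is increasing, $p_{U_n,k}\notin N_j$ whenever $k\ge j$; thus $B\cap N_j\subseteq\{p_{U_n,k}:n\le k<j\}$ is finite. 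On the other hand, each $q_{U_n}$ is still an accumulation point of the tail $\{p_{U_n,k}:k\ge n\}\subseteq B$, so $q_{U_n}\in\cl(B)$ for all $n$, whence $x\in\cl\{q_{U_n}\}\subseteq\cl(B)$, and $B$ accumulates at $x$. Applying the Pytkeev network to $B$ and $O_x$ gives the contradiction exactly as you describe.

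So there is no error in your approach---it coincides with the paper's---only the explicit diagonal is absent. Once written down it is short, and the delicacy you anticipate dissolves: the restriction $k\ge n$ simultaneously guarantees finiteness of each $B\cap N_j$ and, via the cluster points $q_{U_n}$, preserves accumulation at $x$.
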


\begin{proof} Let $\mathcal N$ be a countable Pytkeev network at $x$. We lose no generality assuming that $\mathcal N$ is closed under finite unions and each set $N\in\mathcal N$ containing the point $x$ contains also the intersection $\ddot x$ of all neighborhoods of $x$ in $X$ (observe that $\ddot x=\{x\}$ if $X$ is a $T_1$-space).

First we show that $X$ has countable tightness at $x$. Fix any subset $A\subset X$ with $x\in\bar A$. If $x$ is not an accumulation point of $A$, then there is a neighborhood $O_x\subset X$ with finite intersection $B=O_x\cap A$ and then $B$ is a required countable set with $x\in B$. So, we assume that $x$ is an accumulation point of $A$. Consider the subfamily $\mathcal N(A)=\{N\in\mathcal N:N\cap A\ne\emptyset\}$ and for each $N\in\mathcal N(A)$, fix a point $x_N\in A\cap N$. Then $B=\{x_N:N\in\mathcal N(A)\}$ is a countable set with $x\in \bar B$. Indeed, for every neighborhood $O_x\subset X$ of $x$ there is a set $N\in\mathcal N$ such that $N\subset O_x$ and $N\cap A$ is infinite. In this case $N\in\mathcal N(A)$ and hence $x_N\in N\subset O_x$, so $O_x\cap B\ne\emptyset$ and $x\in \bar B$.
\smallskip

Now we are ready to prove that $\mathcal N$ is a local $k$-network at $x$. Assume conversely that this is not true.
Then there is a neighborhood $U\subset X$ of $x$ such that every neighborhood $V\subset X$ of $x$ contains a compact subset $K_V\subset V$ such that $K_V\not\subset \bigcup\F$ for any finite subfamily $\F\subset\mathcal N_U:=\{N\in\mathcal N:N\subset U\}$.

Observe that for some finite set $F_x\subset X$ the union $F_x\cup\bigcup\mathcal N_U$ is a neighborhood of $x$. Assuming that this is not true, we would conclude that the complement $X\setminus \bigcup\mathcal N_U$ accumulates at $x$ and hence has infinite intersection with some set $N\in\mathcal N_U$ which is not possible. So, $F_x\cup\bigcup_{k\in\w}N_k$ is a neighborhood of $x$ for some finite set $F_x\subset X$, disjoint with the union $\bigcup\mathcal N_U$. Since $\mathcal N$ is a network at $x$, some set $N\in\mathcal N_U$ contains $x$ and by our assumption, contain also the intersection $\ddot x$ of all neighborhoods of $x$ in $X$. Then $\ddot x\subset \bigcup\mathcal N_U$ and hence $F_x\cap\ddot x=\emptyset$. Then we can find a neighborhood $W$ of $x$ in $X$, which is disjoint with the finite set $F_x$. Replacing $W$ by a smaller neighborhood of $x$, we can assume that $W\subset F_x\cup\bigcup\mathcal N_U$, which implies that $W\subset \bigcup\mathcal N_U$ and hence $\bigcup\mathcal N_U$ is a neighborhood of $x$.

 Let $\{N_k\}_{k\in\w}$ be an enumeration of the countable family $\mathcal N_U$.
 By our assumption, for every neighborhood $V\subset W$ of $x$ and every $k\in\w$ the compact set $K_V$ contains a point $x_{V,k}\notin \bigcup_{i\le k}{N_i}$. Since $K_V\subset V\subset W\subset\bigcup_{i\in\w}N_i$, the set $\{x_{V,k}\}_{k\in\w}$ is infinite. By the compactness of $K_V$ this set has an accumulation point $x_V\in K_V\subset V$. Let $\mathcal V$ denote the family of all open neighborhoods $V\subset W$ of the point $x$. It follows that the set $\{x_V:V\in\mathcal V\}$ contains the point $x$ in its closure. Since $X$ has countable tightness at $x$, there is a countable subfamily $\{V_n\}_{n\in\w}\subset\mathcal V$ such that $x$ belongs to the closure of the countable set $\{x_{V_n}\}_{n\in\w}\subset X\setminus \{x\}$. Since each point $x_{V_n}$ is an accumulation point of the set $\{x_{V_n,k}\}_{k\ge n}$, the set $A=\{x_{V_n,k}:n\in\w,\;k\ge n\}\subset X\setminus\{x\}$ accumulates at the point $x$. Since $\mathcal N$ is a Pytkeev network at $x$, there is a set $N\in\mathcal N_U$ that has infinite intersection with the set $A$. It follows that $N=N_k$ for some $k\in\w$ and then $N\cap A\subset \{x_{V_n,i}:n\le i<k\}$ is finite. This is a desired contradiction showing that $\mathcal N$ is a local $k$-network at $x$.
\end{proof}

The following simple example shows that the strong Pytkeev property is strictly stronger that the countability of $k$-character.

\begin{example} For any free ultrafilter $p$ on $\IN$ the space $X=\IN\cup\{p\}\subset\beta\IN$
\begin{itemize}
\item has a countable local $k$-network at $p$;
\item fails to have the (strong) Pytkeev property at $p$.
\end{itemize}
\end{example}

\begin{proof} Since each compact subset of the space $X=\IN\cup\{p\}$ is finite, the family of singletons is a countable local $k$-network at $p$.

Next, we show that $X$ does not have the Pytkeev property at the point $p$. It suffices to show that no countable family $\mathcal P=\{P_n\}_{n\in\w}$ of infinite subsets of $X$ is a Pytkeev $\pi$-network at $p$. For every $n\in\w$ by induction we can choose two distinct points $a_n,b_n\in P_n\cap \w\setminus\{ a_k,b_k:k<n\}$. Enlarge the sets $\{a_n\}_{n\in\w}$ and $\{b_n\}_{n\in\w}$ to two disjoint sets $A,B$ such that $A\cup B=\w$. One of the sets $A$ or $B$ belongs to the ultrafilter $p$. If $A\in p$, then  the neighborhood $O_p=\{p\}\cup A$ contains no infinite set $P\in\mathcal P$. If $B\in p$, then the neighborhood $O_p=\{p\}\cup B$ contains no infinite set $P\in\mathcal P$. In both cases we conclude that $\mathcal P$ is not a Pytkeev $\pi$-network at $p$. Consequently, $X$ does not have the Pytkeev property at $p$.
\end{proof}

The strong Pytkeev property combined with the countable fan tightness can be used to characterize first countable spaces.

\begin{proposition}\label{first} A topological space $X$ is first countable at a point $x\in X$ if and only if $X$ has the strong Pytkeev property at $x$ and has countable fan tightness at $x$.
\end{proposition}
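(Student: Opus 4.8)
The plan is to prove the two implications separately, the forward one being routine and the reverse one carrying all the content. For the easy direction, suppose $X$ is first countable at $x$ and fix a decreasing countable neighbourhood base $\{U_n\}_{n\in\w}$. Then $\mathcal N=\{U_n\}_{n\in\w}$ is a Pytkeev network at $x$: for a neighbourhood $O_x$ pick $U_n\subset O_x$, and since $U_n$ is itself a neighbourhood it meets every set accumulating at $x$ in an infinite set. For countable fan tightness, given sets $A_n$ with $x\in\bigcap_{n\in\w}\cl_X(A_n)$, I would choose $a_n\in U_n\cap A_n$ and set $F_n=\{a_n\}$; any neighbourhood $O_x$ contains some $U_m$ and hence contains $a_n\in U_n\subset U_m$ for all $n\ge m$, so it meets infinitely many of the $F_n$.

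For the reverse direction, fix a Pytkeev network $\mathcal N$ at $x$ and, exactly as in Proposition~\ref{vazlyvo}, assume it is closed under finite unions and that each $N\in\mathcal N$ containing $x$ contains the intersection $\ddot x$ of all neighbourhoods of $x$. The candidate base is the countable family $\mathcal N'=\{N\in\mathcal N:N$ is a neighbourhood of $x\}$, and I would show it is a neighbourhood base by contradiction. So assume some neighbourhood $O_x$ contains no member of $\mathcal N'$, i.e.\ no member of $\mathcal N(O_x):=\{N\in\mathcal N:N\subset O_x\}$ is a neighbourhood of $x$. Enumerating $\mathcal N(O_x)$ and taking partial unions produces an increasing chain $M_0\subset M_1\subset\cdots$ in $\mathcal N(O_x)$, none of which is a neighbourhood of $x$, with $\bigcup_{n\in\w}M_n=\bigcup\mathcal N(O_x)=:O_x^{*}$.

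The first hurdle is to verify that $O_x^{*}$ is itself a neighbourhood of $x$, which is precisely a reuse of the argument from Proposition~\ref{vazlyvo}: were $X\setminus O_x^{*}$ to accumulate at $x$, the strong Pytkeev property would give $N\in\mathcal N$ with $N\subset O_x$ and $N\cap(X\setminus O_x^{*})$ infinite, absurd since $N\subset O_x^{*}$; hence $X\setminus O_x^{*}$ does not accumulate, and the resulting finite exceptional set is removed using $\ddot x\subset O_x^{*}$ and the separation of $x$ from finitely many points outside $\ddot x$. Granting this, I would set $A_n=O_x^{*}\setminus M_n$; each satisfies $x\in\cl_X(A_n)$, since for any neighbourhood $V$ the neighbourhood $V\cap O_x^{*}$ cannot lie inside the non-neighbourhood $M_n$. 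Applying countable fan tightness to $(A_n)_{n\in\w}$ yields finite sets $F_n\subset A_n\subset O_x^{*}$ meeting every neighbourhood infinitely often.

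The conceptual heart, and the main obstacle, is then to show that $A:=\bigcup_{n\in\w}F_n$ accumulates at $x$. This is where confining all points to $O_x^{*}$ pays off: every point of $O_x^{*}$ lies in some $M_n$, and since the $M_n$ increase, each point lies in only finitely many $A_n$, hence in only finitely many $F_n$; therefore a neighbourhood meeting infinitely many $F_n$ must contain infinitely many \emph{distinct} points of $A$, so $A$ accumulates at $x$. Finally the strong Pytkeev property supplies $N\in\mathcal N$ with $N\subset O_x$ and $N\cap A$ infinite; but $N\in\mathcal N(O_x)$ forces $N\subset M_k$ for some $k$, while $F_n\cap M_k=\emptyset$ for all $n\ge k$, so $N\cap A\subset\bigcup_{n<k}F_n$ is finite — the desired contradiction. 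Hence $\mathcal N'$ is a countable neighbourhood base and $X$ is first countable at $x$.
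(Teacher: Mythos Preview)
Your proof is correct and follows essentially the same approach as the paper's: both set up the Pytkeev network closed under finite unions (with the $\ddot x$ convention), show that the union $\bigcup\mathcal N(O_x)$ is a neighbourhood of $x$ via the same non-accumulation argument, use the failure of each partial union $M_n$ to be a neighbourhood to produce sets $A_n$ with $x\in\cl(A_n)$, apply countable fan tightness, argue that each point lies in only finitely many $F_n$ so that $A=\bigcup_n F_n$ genuinely accumulates at $x$, and then derive the contradiction from the Pytkeev property. The only cosmetic differences are that the paper takes $\mathcal N^\circ=\{N^\circ:x\in N^\circ\}$ as the candidate base (you take the $N$'s that are neighbourhoods) and defines $A_k=X\setminus\bigcup_{i\le k}N_i$ while selecting $F_k\subset A_k\cap W$ for a neighbourhood $W\subset\bigcup\mathcal N_U$, whereas you build the restriction to $O_x^{*}$ into the definition of $A_n$ from the start; the underlying mechanism is identical.
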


\begin{proof}  The ``only if'' part is trivial. To prove the ``if'' part, assume that a topological space $X$ with countable fan tightness at $x$ has a countable Pytkeev network $\mathcal N$ at $x$.
We lose no generality assuming that $\mathcal N$ is closed under finite unions and each set $N\in\mathcal N$ containing $x$ contains also the intersection $\ddot x$ of all neighborhoods of $x$ in $X$.

For any set $N\in\mathcal N$ by $N^\circ$ we denote its interior in $X$. We claim that the countable family ${\mathcal N}^\circ=\{N^\circ:N\in\mathcal N,\;x\in N^\circ\}$ is a neighborhood base at $x$. Given any neighborhood $U\subset X$ of $x$, we need to find a set $N\in\mathcal N$ with $x\in N^\circ\subset U$. Consider the countable subfamily $\mathcal N_U=\{N\in\mathcal N\colon x\in N\subset U\}$ and let $\mathcal N_U=\{N_k\}_{k\in\w}$ be its enumeration. We claim that the set  $C=X\setminus\bigcup\mathcal N_U$ does not accumulate at $x$. In the opposite case there would exist a set $N\in\mathcal N$ such that $N\subset U$ and $N\cap C$ is infinite, which is not possible (as $N\in\mathcal N_U$ and hence $N\cap C\subset (\bigcup\mathcal N_U)\cap C=\emptyset$). Therefore $x$ is not an accumulation point of $C$. In this case we can find a neighborhood $W\subset U$ of $x$ such that the set $W\cap C$ is finite. Since $C\cap\ddot x=\emptyset$, we can replace $W$ by a smaller neighborhood and assume that $W\cap C$ is empty, which means $W\subset \bigcup\mathcal N_U$.
\smallskip

Assuming that $x\notin N^\circ$ for every $N\in\mathcal N_U$ and taking into account that the family $\mathcal N_U$ is closed under finite unions, we conclude that the every $k\in\w$ the set $A_k=X\setminus \bigcup_{i\le k}N_i$ contains the point $x$ in its closure.
By the countable fan tightness of $X$ at $x$, there are finite sets $F_k\subset A_k\cap W$, $k\in\w$, such that every neighborhood $O_x$ of $x$ meets infinitely many sets $F_k$, $k\in\w$. Since $W\subset \bigcup_{k\in\w}N_k$, for every $k\in\w$ there is $n\in\w$ such that $F_k\subset \bigcup_{i\le n}N_i$ and hence $F_k\cap F_m=\emptyset$ for all $m\ge n$. This observation implies that $x$ is an accumulation point of the set $A=\bigcup_{k\in\w}F_k$.

Since $\mathcal N$ is a Pytkeev network at $x$, there is a set $N\in\mathcal N$ such that $x\in N\subset U$ and $N\cap A$ is infinite. The set $N$ belongs to the family $\mathcal N_U$ and hence coincides with some set $N_m$. The choice of the sequence $(F_k)$ guarantees that $N\cap A=N\cap\bigcup_{k\in\w}F_k\subset \bigcup_{k<m}F_k$ is finite, which is a desired contradiction showing that $x\in N^\circ\subset N\subset U$ for some $N\in\mathcal N$. So, $\mathcal N^\circ=\{N^\circ:N\in\mathcal N\}$ is a countable neighborhood base at $x$.
\end{proof}

For regular topological spaces the countable fan tightness in Proposition~\ref{first} can be weakened to the countable fan open-tightness (as was observed by  M.~Sakai in \cite[3.2]{MSak2}).

\begin{proposition}[Sakai]\label{first:sak} A regular topological space $X$ is first countable at a point $x\in X$ if and only if $X$ has both the strong Pytkeev property and the countable fan open-tightness at $x$.
\end{proposition}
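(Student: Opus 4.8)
The ``only if'' part I would dispose of at once: first countability at $x$ implies both the strong Pytkeev property (a countable neighbourhood base is a Pytkeev network) and the countable open fan tightness (given open sets $A_n$ with $x\in\bigcap_{n}\cl_X(A_n)$ and a decreasing countable base $(U_n)_{n\in\w}$ at $x$, the singletons $F_n=\{a_n\}$ with $a_n\in A_n\cap U_n$ work). So the whole content lies in the ``if'' part, and my plan is to reduce it to the argument already carried out in the proof of Proposition~\ref{first}, whose single use of fan tightness I will replace by its open version.

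The key step, and the place where regularity enters, is the observation that \emph{in a regular space the Pytkeev network at $x$ may be taken to consist of closed sets}. Given any countable Pytkeev network $\mathcal N$ at $x$, I would pass to $\mathcal N'=\{\cl_X(N):N\in\mathcal N\}$ and check that it is again a Pytkeev network: for an open neighbourhood $O_x$ of $x$ and a set $A$ accumulating at $x$, regularity provides an open $V$ with $x\in V\subset\cl_X(V)\subset O_x$; applying the Pytkeev property of $\mathcal N$ to $V$ and $A$ yields $N\in\mathcal N$ with $N\subset V$ and $N\cap A$ infinite, so $\cl_X(N)\subset\cl_X(V)\subset O_x$ while $\cl_X(N)\cap A\supset N\cap A$ stays infinite. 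Replacing $\mathcal N$ by $\mathcal N'$ and then closing it under finite unions (which preserves closedness of the members), I may assume, exactly as in Proposition~\ref{first}, that $\mathcal N$ is closed under finite unions, that each $N\in\mathcal N$ containing $x$ contains the intersection $\ddot x$ of all neighbourhoods of $x$, and, in addition, that every member of $\mathcal N$ is closed.

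With this normalization I would simply rerun the proof of Proposition~\ref{first} word for word, changing only ``countable fan tightness'' to ``countable open fan tightness''. The single appeal to fan tightness there is to the sets $A_k=X\setminus\bigcup_{i\le k}N_i$, where $\{N_k\}_{k\in\w}$ enumerates $\mathcal N_U=\{N\in\mathcal N:x\in N\subset U\}$; since the $N_i$ are now closed and $\mathcal N_U$ is closed under finite unions, each $\bigcup_{i\le k}N_i$ is a closed member of $\mathcal N_U$ and so $A_k$ is \emph{open}. The contradiction hypothesis $x\notin N^\circ$ for all $N\in\mathcal N_U$ still gives $x\in\cl_X(A_k)$, hence $x\in\cl_X(A_k\cap W)$ for the neighbourhood $W$ of the earlier proof (which I take open), and $A_k\cap W$ is open; thus the countable open fan tightness furnishes the finite sets $F_k\subset A_k\cap W$, and the rest of the argument (producing an accumulating set $A=\bigcup_k F_k$ on which no $N_m$ can have infinite trace) goes through unchanged, showing $\mathcal N^\circ=\{N^\circ:N\in\mathcal N,\ x\in N^\circ\}$ to be a countable base at $x$. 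The only genuinely new point, and the step I expect to require the most care, is this reduction to a closed-set Pytkeev network; once it is in place the openness of the sets $A_k$ is automatic and Proposition~\ref{first} applies verbatim.
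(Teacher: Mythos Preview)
The paper does not actually supply a proof of this proposition: it is stated with attribution to Sakai and a reference to \cite[3.2]{MSak2}, immediately after the remark that ``for regular topological spaces the countable fan tightness in Proposition~\ref{first} can be weakened to the countable fan open-tightness''. So there is no proof in the paper to compare against, only the implicit suggestion that one should adapt Proposition~\ref{first}.

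Your proposal does exactly this, and the argument is correct. The one genuinely new ingredient---using regularity to replace the Pytkeev network $\mathcal N$ by $\{\cl_X(N):N\in\mathcal N\}$ so that the sets $A_k=X\setminus\bigcup_{i\le k}N_i$ become open---is precisely the point, and your verification that closures still form a Pytkeev network (shrink $O_x$ to a $V$ with $\cl_X(V)\subset O_x$ before invoking the network) is the right one. Closing under finite unions preserves closedness, and adjoining $\ddot x$ to those members containing $x$ is harmless since in a regular space $\ddot x$ is itself closed (it equals the intersection of the closed neighbourhoods of $x$). After that, the proof of Proposition~\ref{first} goes through verbatim with ``open fan tightness'' in place of ``fan tightness'', exactly as you describe. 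Your handling of the ``only if'' direction is also fine.
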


The following proposition gives conditions under which the strong Pytkeev property is equivalent to some weaker properties (countable $k$-character or countable $\cs^*$-character). Having in mind the notion of a $k$-space we define a topological space $X$ to be a {\em $k_2$-space} if for any non-closed subset $A\subset X$ there exists compact Hausdorff subspace $K\subset X$ such that $A\cap K$ is not closed in $K$. It is clear that each $k_2$-space is a $k$-space and each Hausdorff $k$-space $X$ is a $k_2$-space. So, in the realm of Hausdorff spaces these two notions coincide.

\begin{proposition} A topological space $X$ has the strong Pytkeev property at a point $x\in X$ if one of the following conditions is satisfied:
\begin{enumerate}
\item $X$ has countable $\cs^*$-character and is Fr\'echet-Urysohn at $x$;
\item $X$ has countable $k$-character at $x$ and is sequential or a $k_2$-space.
\end{enumerate}
\end{proposition}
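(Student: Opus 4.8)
The plan is to handle the two sufficient conditions separately, but in both the guiding idea is the same: produce an explicit countable Pytkeev network at $x$ by converting the accumulation of a set $A$ at $x$ into either a sequence converging to $x$ or an infinite compact set that \emph{sits inside a prescribed neighborhood}.

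For (1) I would show that a countable $\cs^*$-network $\mathcal N$ at $x$ is already a Pytkeev network at $x$. Given a neighborhood $O_x$ and a set $A$ accumulating at $x$, the Fr\'echet--Urysohn property at $x$ yields a sequence of pairwise distinct points of $A$ converging to $x$; applying the defining property of the $\cs^*$-network to $O_x$ and this sequence produces $N\in\mathcal N$ with $N\subset O_x$ containing infinitely many terms of the sequence, whence $N\cap A$ is infinite. So case (1) is essentially immediate.

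For (2) I would start from a local $k$-network $\mathcal N$ at $x$ and assume, without loss of generality, that $\mathcal N$ is closed under finite unions (this keeps it countable). I claim this $\mathcal N$ is a Pytkeev network. Fix a neighborhood $O_x$ and a set $A$ accumulating at $x$, and let $U_x$ be the open neighborhood provided by the local $k$-network for $O_x$. The key reduction is that \emph{it suffices to find a compact set $K\subset U_x$ with $K\cap A$ infinite}: indeed there is then a finite $\F\subset\mathcal N$ with $K\subset\bigcup\F\subset O_x$, and since $\mathcal N$ is closed under finite unions the single set $N=\bigcup\F\in\mathcal N$ satisfies $N\subset O_x$ and $N\cap A\supset K\cap A$ infinite. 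Thus everything reduces to producing a compact subset of the open set $U_x$ that meets $A$ in an infinite set.

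I expect the main obstacle to be exactly the requirement that this compact set lie \emph{inside} $U_x$. Writing $B=(A\cap U_x)\setminus\{x\}$, one has $x\in\cl_X(B)\cap U_x$ and $x\notin B$, so $B$ is non-closed; sequentiality (resp. the $k_2$-property) at once yields a sequence from $B$ converging to some point (resp. a compact Hausdorff set detecting the non-closedness of $B$), but a priori this limit, and hence the resulting compact set, can escape $U_x$. The device that defeats this is to apply the hypothesis not to $B$ but to $T=B\cup(X\setminus U_x)$: since $x\in\cl_X(B)\subset\cl_X(T)$ while $x\notin T$, the set $T$ is non-closed in $X$, and every witness to its non-closedness must lie in $X\setminus T\subset U_x$. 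In the sequential case $X$ supplies a sequence $(s_n)\subset T$ converging to a point $y\in X\setminus T\subset U_x$; as $U_x$ is open, a tail $(s_n)_{n\ge n_0}$ lies in $T\cap U_x=B$, so $K=\{s_n:n\ge n_0\}\cup\{y\}\subset U_x$ is compact with $K\cap A$ infinite. In the $k_2$ case $X$ supplies a compact Hausdorff $K_0$ with $T\cap K_0$ non-closed in $K_0$, witnessed by some $q\in(X\setminus T)\cap K_0\subset U_x$; since $q\notin\cl_{K_0}(K_0\setminus U_x)$ one gets $q\in\cl_{K_0}(B\cap K_0)$, and using the regularity of the compact Hausdorff space $K_0$ at the point $q\in U_x$ one extracts a compact neighborhood $K\subset U_x$ of $q$ in $K_0$ with $q\in\cl_K(B\cap K)$, so that $B\cap K$, and hence $A\cap K$, is infinite. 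In either case this is the desired compact subset of $U_x$, completing the argument.
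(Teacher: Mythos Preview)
Your overall strategy coincides with the paper's: in (1) you show a $\cs^*$-network is already a Pytkeev network via a Fr\'echet--Urysohn sequence, and in (2) you show a local $k$-network (closed under finite unions) is a Pytkeev network by reducing to the production of a compact $K\subset U_x$ with $K\cap A$ infinite, using the trick of enlarging $B$ to $T=B\cup(X\setminus U_x)$ so that any witness to non-closedness is forced into $U_x$. The $k_2$-argument is essentially identical to the paper's.

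There is, however, a genuine gap in the sequential case (and a parallel, smaller one in (1)): you tacitly assume the space is $T_1$ when you pass from a convergent sequence to an infinite compact set. Sequentiality gives $(s_n)\subset T$ with $s_n\to y\notin T$, and a tail lies in $B$, but nothing prevents this tail from taking only finitely many values. If some value $s\in B$ occurs infinitely often, then $s$ lies in the intersection $\ddot y$ of all neighborhoods of $y$; since $s\in T$ and $y\notin T$ this forces $s\ne y$, which is impossible in a $T_1$-space but perfectly possible otherwise. In that situation your set $K=\{s_n:n\ge n_0\}\cup\{y\}$ is finite and $K\cap A$ need not be infinite. The same issue arises in (1): the Fr\'echet--Urysohn sequence in $A\setminus\{x\}$ may have a constant subsequence with value in $\ddot x$, and you need to argue (as the paper does, by splitting into the cases $|\ddot x\cap A|=\aleph_0$ and $|\ddot x\cap A|<\aleph_0$) that one can always extract an injective subsequence.

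The paper works in full generality (it repeatedly manipulates $\ddot x$), and this is exactly why its sequential argument is so much longer than yours: it first reduces to $A\cap\ddot x=\emptyset$, then distinguishes whether $x$ lies in the first sequential closure $B^{(1)}$ or only in a higher iterate $B^{(\alpha)}$, and in the latter case runs a transfinite induction (the paper's Claim~1.8) to manufacture an \emph{injective} sequence in $B$ converging to some point of $U_x$. If you are willing to assume $T_1$, your direct argument is correct and considerably shorter; if not, you must either supply that extra work or note the restriction.
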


\begin{proof} 1. Assume that $X$ is Fr\'echet-Urysohn and has countable $\cs^*$-character at $x$. Then $X$ has a countable $\cs^*$-network $\N$ at $x$. We claim that $\N$ is a Pytkeev network at $x$. Given a neighborhood $O_x\subset X$ of $x$ and a subset $A\subset X$ accumulating at $x$, we need to find
a subset $N\in\mathcal N$ of $O_x$ having infinite intersection with  $A$.

Let $\ddot x$ be the intersection of all neighborhoods of $x$. If $\ddot x\cap A$ is infinite, then we can choose any sequence $(a_n)_{n\in\w}$ of pairwise distinct points of $\ddot x\cap A$ and conclude that this sequence converges to $x$. By definition, the $\cs^*$-network $\mathcal N$ contains a set $N\subset O_x$ containing infinitely many points of the sequence $(a_n)$ and hence infinitely many points of the set $A$.

So, we assume that $\ddot x\cap A$ is finite. Then $x$ remains an accumulation point of the set $A\setminus \ddot x$ and by the Fr\'echet-Urysohn property of $X$ at $x$, we can find a sequence $(a_n)_{n\in\w}$ in $A\setminus \ddot x$, convergent to $x$. It follows that this sequence contains no constant subsequences, which allows us to select a subsequence in $(a_n)_{n\in\w}$ consisting of pairwise distinct points. Replacing the sequence $(a_n)$ by this subsequence, we can assume that the points $a_n$, $n\in\w$, are pairwise distinct. By definition, the $\cs^*$-network $\mathcal N$ contain a set $N\subset O_x$ containing infinitely many points of the sequence  $(a_n)_{n\in\w}$ and hence infinitely many points of the set $A$.
\smallskip

2. Assume that $X$ has countable $k$-character and is sequential or a $k_2$-space. Then $X$ admits a countable local $k$-network $\mathcal N$ at $x$. We claim that $\mathcal N$ is a Pytkeev network at $x$. Given a neighborhood $O_x\subset X$ of $x$ and a subset $A\subset X$ accumulating at $x$, we need to find a set $N\in\mathcal N$ such that $N\subset O_x$ and $N\cap A$ is infinite.

By the definition of the local $k$-network $\mathcal N$, there exists an open neighborhood $U_x\subset O_x$ such that each compact subset $K\subset U_x$ is covered by finitely many sets of the network $\mathcal N$ that are contained in $O_x$. Let $\ddot x$ be the intersection of all neighborhoods of the point $x$. It is clear that $\ddot x$ is a compact subset of $U_x$, so $\ddot x\subset\bigcup\F\subset O_x$ for some finite subfamily $\F\subset \mathcal N$. If the intersection $\ddot x\cap A$ is infinite, then for some $F\in\F\subset\mathcal N$ the intersection $A\cap F$ is infinite and we are done.

So, we assume that $\ddot x\cap A$ is finite. Since $x$ is an accumulating point of $A$, it is also accumulating point of $A\setminus \ddot x$. Replacing the set $A$ by $A\setminus \ddot x$ we can assume that
$A\cap \ddot x=\emptyset$.
It follows that the set $B=A\cup (X\setminus U_x)$ accumulates at $x$ and hence it is not closed in $X$.

If $X$ is a $k_2$-space, then we can find a compact Hausdorff subspace $K\subset X$ such that $K\cap B$ is not closed in $K$. Consequently, there exists a point $z\in K\setminus B\subset U_x$ which belongs to the closure of the set $K\cap B$. The compact space $K$, being Hausdorff, is regular. So, we can choose a compact neighborhood $K_z\subset K\cap U_x$ of the point $z$ in $K$. Observe that the set $K_z\setminus B$ is infinite (since $z\notin K_z\setminus B$ is contained in the closure of $K_z\setminus B$ and $K_z$ is Hausdorff). By the choice of the neighborhood $U_x$, for the compact set $K_z\subset U_x$ there is a finite subfamily $\F\subset\mathcal N$ such that $K_z\setminus B\subset K_z\subset\bigcup\F\subset O_x$. By Pigeonhole Principle, for some set $F\in\F\subset\mathcal N$ the set $F\setminus B=F\cap A$ is infinite. This completes the proof of the $k_2$-case.
\smallskip

Now assume that the space $X$ is sequential. By our assumption, the set $A$ misses $\ddot x$ and contains the point $x$ in its closure. Then the set $B=A\cup(X\setminus U_x)$ contains $x$ in its closure too and does not intersect $\ddot x$ (as $\ddot x\subset U_x$). For any subset $C\subset X$ denote by $C^{(1)}$ the sequential closure of $C$, i.e., the set of limit points of sequences $\{c_n\}_{n\in\IN}\subset C$ that converge in $X$. Put $B^{(0)}=B$ and by transfinite induction, for every ordinal $\alpha>1$ put $B^{(\alpha)}=(B^{(<\alpha)})^{(1)}$ where $B^{(<\alpha)}=\bigcup_{\beta<\alpha}B^{(\beta)}$. The sequentiality of the space $X$ guarantees that $x\in B^{(\alpha_x)}$ for some ordinal $\alpha_x>0$. If $x\in B^{(1)}$, then we can choose a sequence $\{b_n\}_{n\in\w}\subset B$, convergent to $x$.
Since $B\cap \ddot x=\emptyset$, this sequence cannot contain a constant subsequence and hence it contains a subsequence of pairwise distinct points of $B$. Replacing the sequence $(b_n)_{n\in\w}$ by this subsequence, we can assume that all points $b_n$, $n\in\w$, are pairwise distinct and belong to the neighborhood $U_x$. The local $k$-network $\mathcal N$, being a $\cs^*$-network at $x$, contains a set $N\subset O_x$ containing infinitely many points of the convergent sequence $\{b_n\}_{n\in\w}\subset B\cap U_x\subset A$ and hence infinitely many points of the set $A$. Now assume that $x\notin B^{(1)}$.

A sequence of points $(b_n)_{n\in\w}$ will be called {\em injective} if $b_n\ne b_m$ for any distinct numbers $n,m\in\w$.

\begin{claim}\label{cl1.8} For every ordinal $\alpha\ge 1$, any open neighborhood $O_z\subset X$ of any point $z\in B^{(\alpha)}\setminus B^{(1)}$ contains an injective sequence $\{b_n\}_{n\in\w}\subset B$ convergent to a point of $O_z$.
\end{claim}

\begin{proof} This claim will be proved by induction on $\alpha\ge 1$.
For $\alpha=1$ this statement trivially holds. Assume that for some ordinal $\alpha>1$ the claim has been proved for all ordinals $<\alpha$. Choose any point $z\in B^{(\alpha)}\setminus B^{(1)}$ and an open neighborhood $O_z\subset X$ of $z$. If $z\in B^{(\beta)}$ for some $\beta<\alpha$, then we can apply the inductive assumption and complete the proof.  So, assume that $z\notin B^{(<\alpha)}:=\bigcup_{\beta<\alpha}B^{(\beta)}$. In this case $z$ is the limit of some convergent sequence $\{z_n\}_{n\in\w}\subset B^{(<\alpha)}\cap O_z$. If $\alpha>2$, then $z\notin B^{(<\alpha)}$ implies the existence of $n\in\w$ such that $z_n\notin B^{(1)}$. Then by the inductive assumption, the neighborhood $O_z$ of $z_n$ contains an injective sequence $\{b_n\}_{n\in\w}\subset B$ converging to some point of $O_z$ and we are done.

It remains to consider the case $\alpha=2$. Assume that the neighborhood $O_z$ contains no injective sequence $\{b_n\}_{n\in\w}\subset B$ convergent to a point of $O_z$. Since $z\in B^{(2)}\setminus B^{(1)}$, in the sequence $(z_n)_{n\in\w}$ there is only finitely many points of the set $B$. Passing to a subsequence, we can assume that $z_n\notin B$ for all $n\in\w$. Since $O_z$ contains no injective convergent sequence of points of $B$, for every $n\in\w$ the point $z_n\in O_z\cap B$ is the limit of some constant sequence in $B$, which implies that the intersection $\ddot{z}_n$ of all neighborhoods of $z_n$ meets the set $B$ and hence contains a point $b_n\in B\cap \ddot{z}_n$. It is easy to see that the convergence of the sequence $(z_n)_{n\in\w}$ to $z$ implies the convergence of $(b_n)_{n\in\w}$ to the same limit $z$. This means that $z\in B^{(1)}$, which contradicts the choice of $z$.
This contradiction completes the proof of Claim~\ref{cl1.8}.
\end{proof}

By Claim~\ref{cl1.8}, for the ordinal $\alpha_x$ the neighborhood $U_x$ of the point $x\in B^{(\alpha_x)}\setminus B^{(1)}$ contains an injective sequence $\{b_n\}_{n\in\w}\subset B$ convergent to a point $b_\infty\in U_x$. Then $K=\{b_\infty\}\cup\{b_n:n\in\w\}$ is an infinite compact subset of $U_x$. By the choice of $U_x$, there is a finite subfamily $\F\subset\mathcal N$ such that $K\subset \bigcup\F\subset O_x$. By the Pigeonhole Principle, some set $F\in\F\subset\mathcal N$ contains infinitely many points of the sequence $\{b_n\}_{n\in\w}\subset B\cap U_x\subset A$ and hence has infinite intersection $F\cap A$ with the set $A$. This witnesses that $\mathcal N$ is a Pytkeev network at $x$.
\end{proof}

In \cite{FlD} Fedeli and le Donne constructed an example of a Tychonoff space with the Pytkeev property, which is not subsequential. We shall see that this space has the strong Pytkeev property.

\begin{example}[Fedeli, le Donne] There exists a countable regular space $X$ with a unique non-isolated point which has the strong Pytkeev property but is not subsequential.
\end{example}

\begin{proof} We recall the construction of \cite{FlD}. Fix any free filter $\F$ on the set $\w$. Take any point $\infty\notin\w\times\w$ and consider the space $X=(\w\times\w)\cup\{\infty\}$ endowed with the topology in which all points of the set $\w\times\w$ are isolated while the sets
$$O_{F,\varphi}=\{\infty\}\cup\{(x,y)\in F\times\w:y\ge \varphi(x)\}\mbox{ \ for $F\in\F$ and  $\varphi\in\w^\w$}$$
form a neighborhood base at the point $\infty$. For every points $x,y\in\w$ put $N_{x,y}=\{(u,v)\in\w\times\w:u=x,\;v\ge y\}$ and observe that $\mathcal N=\{N_{x,y}:x,y\in\w\}$ is a countable Pytkeev network at $\infty$, which means that $X$ has the strong Pytkeev property.
By \cite{FlD}, the space $X$ is not subsequential.
\end{proof}

\begin{remark}
The results of this section fit into the following diagram:

$$\xymatrix{
\mbox{first countable}\ar@{=>}[r]&\mbox{strong Pytkeev property}\ar@{->}|-{\backslash}[dl]\ar@{=>}[r]\ar@{=>}[d]\ar@/_1.5pc/_{\mbox{\tiny + countable fan  tightness}}[l]&\mbox{countable $k$-character}\ar@{=>}[r]\ar@/^0.9pc/^(0.45){\mbox{\tiny + $k_2$-space}}[l]\ar@/_1.5pc/_{\mbox{\tiny + sequential}}[l]&\mbox{countable $\cs^*$-character}\ar@/^2.8pc/^{\mbox{\tiny + Fr\'echet-Urysohn}}[ll]\\
\mbox{subsequential}\ar@{=>}[r]&\mbox{Pytkeev}
}
$$
\end{remark}
\smallskip

\section{Function spaces with compact-open topology}\label{s2}

In this section we shall establish the strong Pytkeev property in spaces $C_\I(X,Y)$ of continuous functions endowed with the $\I$-open topology, or else the topology of uniform convergence of sets belonging to some ideal $\I$ of compact subsets.

Let $X$ be a topological space. A family $\I$ of compact subsets of $X$ is called {\em an ideal of compact sets} if $\bigcup\I=X$ and for any sets $A,B\in\I$ and compact subset $K\subset X$ we get $A\cup B\in\I$ and $A\cap K\in\I$.

For an ideal $\I$ of compact subsets of a topological space $X$ and a topological space $Y$ by $C_\I(X,Y)$ we shall denote the space $C(X,Y)$ of all continuous functions from $X$ to $Y$, endowed with the {\em $\I$-open topology} generated by the subbase consisting of the sets
$$[K;U]=\{f\in C_\I(X,Y):f(K)\subset U\}$$where $K\in\I$ and $U$ is an open subset of $Y$.

If $\I$ is the ideal of compact (resp. finite) subsets of $X$, then the $\I$-open topology coincides with the compact-open topology (resp. the topology of pointwise convergence) on $C(X,Y)$. In this case the function space $C_\I(X,Y)$ will be denoted by $C_k(X,Y)$ (resp. $C_p(X,Y)$).

We shall be interested in detecting function spaces $C_\I(X,Y)$ possessing the strong Pytkeev property. For this we should impose some restrictions on the ideal $\I$.

\begin{definition} An ideal $\I$ of compact subsets of a topological space $X$ is defined to be {\em discretely-complete} if for any compact subset $A,B\subset X$ such that $A\setminus B$ is a countable discrete subspace of $X$ the inclusion $B\in\I$ implies $A\in\I$.
\end{definition}

It is clear that the ideal of all compact subsets of $X$ is discretely-complete. More generally, for any infinite cardinal $\kappa$ the ideal $\I$ of compact subsets of cardinality $\le\kappa$ in  $X$ is discretely-complete. On the other hand, the ideal of finite subsets of $X$ is discretely-complete if and only if $X$ contains no infinite compact subset with finite set of non-isolated points.

Let us recall \cite[\S11]{Gru} that a family $\mathcal N$ of subsets of a topological space is a {\em $k$-network} in $X$ if for any open set $U\subset X$ and compact subset $K\subset U$ there is a finite subfamily $\F\subset \mathcal N$ such that $K\subset\bigcup\F\subset U$.
It is easy to see that a family $\mathcal N$ of a (Hausdorff) space $X$ is a $k$-network in $X$ (if and) only if $X$ is a local $k$-network at every point $x\in X$.

Regular topological spaces with countable $k$-network are called {\em $\aleph_0$-spaces}. Such spaces were introduced by E.~Michael \cite{Mi} who proved that for any $\aleph_0$-spaces $X,Y$ the function space $C_k(X,Y)$ is an $\aleph_0$-space. In particular, for any separable metrizable spaces $X,Y$ the function space $C_k(X,Y)$ is an $\aleph_0$-space. In \cite{Ban} this Michael's results was improved  to $\Pyt_0$-spaces. Following \cite{Ban} we define a regular topological space $X$ to be a {\em $\Pyt_0$-space} if it has a {\em countable Pytkeev network}, i.e., a countable network $\mathcal N$, which is a Pytkeev network at each point $x\in X$. By \cite{Ban}, each $\Pyt_0$-space is an $\aleph_0$-space, and for any $\aleph_0$-space $X$ and a $\Pyt_0$-space $Y$ the function space $C_k(X,Y)$ is a $\Pyt_0$-space. Taking into account that each $\Pyt_0$-space has the strong Pytkeev property, we can regard the following theorem as a local version of the mentioned result of \cite{Ban}.

\begin{theorem}\label{main} Let be an $\aleph_0$-space (more generally, a Hausdorff space with countable $k$-network) and  $\I$ be a discretely-complete ideal of compact subsets of $X$. If a topological space $Y$ has the strong Pytkeev property at a point $y_0\in Y$, then the function space $C_\I(X,Y)$ has the strong Pytkeev property at the constant function $f:X\to\{y_0\}\subset Y$.
\end{theorem}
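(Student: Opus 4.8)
The plan is to manufacture an explicit countable Pytkeev network at $f$ out of the countable data on the two factors. Fix a countable $k$-network $\K$ for $X$, closed under finite unions, and a countable Pytkeev network $\PP$ for $Y$ at $y_0$. Since the strong Pytkeev property implies countable $k$-character (Proposition~\ref{vazlyvo}), I may enlarge $\PP$ so that it is simultaneously a Pytkeev network, a local $k$-network and a network at $y_0$, is closed under finite unions, and every $B\in\PP$ with $y_0\in B$ contains the intersection $\ddot y_0$ of all neighborhoods of $y_0$. As candidate network I take
$$\N=\Big\{\textstyle\bigcap_{i=1}^m[P_i;B_i]:m\in\w,\ P_i\in\K,\ B_i\in\PP\Big\},$$
which is countable. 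I will use that the sets $[K;U]$ with $K\in\I$ and $U$ a neighborhood of $y_0$ form a neighborhood base at $f$ (for $[\bigcup_iK_i;\bigcap_iU_i]\subset\bigcap_i[K_i;U_i]$ and $\bigcup_iK_i\in\I$), so that it suffices to verify the Pytkeev condition against such basic neighborhoods $[K;U]$, and that $U$ may be shrunk at will.

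The containment half is immediate: whenever the pieces cover $K$, i.e. $K\subset\bigcup_iP_i$, and each $B_i\subset U$, then $g\in\bigcap_i[P_i;B_i]$ forces $g(\bigcup_iP_i)\subset U$ and hence $g(K)\subset U$, so $\bigcap_i[P_i;B_i]\subset[K;U]$. The whole difficulty is therefore the second half: given $A\subset C_\I(X,Y)$ accumulating at $f$ and a basic neighborhood $[K;U_0]$, to exhibit pieces $P_i\in\K$ covering $K$ and sets $B_i\in\PP$ inside $U_0$ for which infinitely many $g\in A$ satisfy $g(P_i)\subset B_i$ for all $i$. The key structural observation is that accumulation at $f$ is very strong: since \emph{every} compact member of $\I$ enters the neighborhood base, a set $A$ accumulates at $f$ precisely when for each $K'\in\I$ and each neighborhood $U$ of $y_0$ there are infinitely many $g\in A$ with $g(K')\subset U$. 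Thus the witnessing functions are uniformly close to $y_0$ on all of $\I$, which rules out the pathological behaviour off $K$ that would otherwise destroy $\N$.

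To produce the required data I first use the countable $k$-character of $Y$ at $y_0$ to fix a neighborhood $U^*\subset U_0$ of $y_0$ such that every compact subset of $U^*$ is covered by finitely many members of $\PP$ lying inside $U_0$; replacing $A$ by $A\cap[K;U^*]$ (still accumulating at $f$) I may assume every relevant $g$ maps $K$ into $U^*$. Since $X$ is cosmic, $K$ is compact metrizable and the traces $\{P\cap K:P\in\K\}$ form a countable $k$-network of $K$, so I can cover $K$ by finitely many $P_i\in\K$. Having secured control $g(P_i)\subset U^*$ on a sufficiently large, tame subfamily, I select the value–sets as follows: the compact image $g(\bigcup_iP_i)\subset U^*$ is covered, via the $k$-character of $Y$, by a finite subfamily of $\PP$ whose union $B^{(g)}\in\PP$ lies inside $U_0$; as $\PP$ is countable, the Pigeonhole Principle (together with the Pytkeev property of $Y$ at $y_0$, which is what lets infinitely many of these images accumulate into common $\PP$–sets) yields a single finite assignment $i\mapsto B_i\in\PP$, $B_i\subset U_0$, realized by infinitely many $g$. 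For this infinite subfamily $\bigcap_i[P_i;B_i]\cap A$ is infinite, and by the containment step $\bigcap_i[P_i;B_i]\subset[K;U_0]$, as required.

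The main obstacle is the gap between what accumulation controls and what the pieces demand: the sets $P_i\in\K$ covering $K$ typically protrude beyond $K$, whereas strong accumulation delivers control only on compact members of $\I$, and a $k$-network of a non-$\sigma$-compact $X$ need not consist of relatively compact sets. The plan is to close this gap using the discrete-completeness of $\I$. From a witnessing sequence $(g_n)\subset A$ converging to $f$ on $K$ one reads off, for each $n$, the points at which $g_n$ fails the desired control on $\bigcup_iP_i$; arranging these into a convergent sequence $S$ with limit in $K$, the enlargement $K^+=K\cup S$ satisfies that $K^+\setminus K$ is a countable discrete subspace, so $K^+\in\I$ by discrete-completeness. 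Then $[K^+;U^*]$ is again a neighborhood of $f$, and accumulation of $A$ against it furnishes infinitely many functions controlled on the compact set $K^+$, hence on the protruding parts of the pieces $P_i$. Verifying that such a convergent sequence of distinguishing points can be extracted — i.e. transferring the tame control from $K$ itself to a finite $k$-network cover of $K$ — is the delicate technical heart; once it is in place, the remaining bookkeeping (closure of $\K$ and $\PP$ under finite unions, the absorption of $\ddot y_0$, and the pigeonhole selection of the $B_i$) is routine.
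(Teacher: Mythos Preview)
Your candidate network is fine and your reduction to basic neighborhoods $[K;U_0]$ with $K\in\I$ is correct, but the heart of the argument is missing and the sketch you give for it does not work. Two concrete problems:

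\smallskip
\textbf{(a) The image $g(\bigcup_iP_i)$ need not be compact.} Members of a $k$-network are arbitrary subsets; nothing forces $\bigcup_iP_i$ (or its closure) to be compact. So you cannot invoke the local $k$-network property of $\PP$ at $y_0$ on that image, and the subsequent pigeonhole selection of the $B_i$ has no input to work with.

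\smallskip
\textbf{(b) The ``convergent sequence of distinguishing points'' cannot be produced from a fixed finite cover.} Once you fix finitely many $P_1,\dots,P_m\in\K$ covering $K$, the set $\bigcup_iP_i$ is a fixed (possibly unbounded) set, and the points where $g_n$ fails control live in $\bigcup_iP_i\setminus K$ with no relation to $K$ whatsoever; there is no mechanism forcing them to converge into $K$. Even if you did extract some compact $K^+\supset K$ with $K^+\in\I$, the neighborhood $[K^+;U^*]$ only controls $g$ on $K^+$, not on the original pieces $P_i$, so you still cannot conclude $g(P_i)\subset B_i$.

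\smallskip
The paper's fix is to approach $K$ \emph{from above} rather than cover it from below. Enumerate $\{K'\in\K:C\subset K'\}$ as $\{K'_j\}_{j\in\w}$ and set $K_j=\bigcap_{i\le j}K'_i$; since $\K$ is a $k$-network, the decreasing sequence $(K_j)$ converges to $C$ in the sense that every open neighborhood of $C$ eventually contains $K_j$. Pair this with the increasing sequence $P_j=\bigcup_{i\le j}P'_i$ of members of $\PP$ inside $U_0$ and look at the single sets $[K_j;P_j]$. Now if every $[K_j;P_j]\cap\A$ were finite, each $\alpha\in\A$ yields a failure point $x_\alpha\in K_{j_\alpha}$ with $\alpha(x_\alpha)\notin P_{j_\alpha}$, and because $K_j\downarrow C$ the set $C'=C\cup\{x_\alpha\}$ is automatically compact with $C'\setminus C$ countable discrete --- \emph{this} is where discrete-completeness is used. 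Finally, the Pytkeev property of $\PP$ (not the $k$-character) shows the failure values $\{\alpha(x_\alpha)\}$ do not accumulate at $y_0$, so a smaller neighborhood $[C';W]$ meets $\A$ only finitely often, a contradiction. Your outline never reaches the point where this shrinking-from-above structure is available, and the cover-from-below route you chose does not supply a substitute.
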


\begin{proof}
Let $\mathcal K$ be a countable $k$-network on the space $X$ and $\mathcal P$ be a Pytkeev network at the point $y_0\in Y$ of the space $Y$. We lose no generality assuming that the networks $\mathcal K$ and $\mathcal P$ are closed under finite unions and finite intersections and each set $P\in\mathcal P$ contains the point $y_0$.

For two subsets $K\subset X$ and $P\subset Y$ let
$$[K;P]=\{f\in C_\I(X,Y):f(K)\subset P\}\subset C_\I(X,Y).$$
We claim that the countable family
$$[\kern-1.5pt[\mathcal K;\mathcal P]\kern-1.5pt]=\{[K;P]:K\in\K,\;P\in\mathcal P\}$$
is a Pytkeev network at the constant function $f:X\to\{y_0\}\subset Y$ in the function space $C_\I(X,Y)$.

Given a subset $\A\subset C_\I(X,Y)$ accumulating at $f$ and a neighborhood $O_f\subset C_\I(X,Y)$ of $f$ we need to find a set $\F\in[\kern-1.5pt[\K,\PP]\kern-1.5pt]$ such that $\F\subset O_f$ and $\A\cap \F$ is infinite.

We lose no generality assuming that $\A\subset O_f$ and the neighborhood $O_f$ is of basic form
$O_f=[C;U]$
for some compact set $C\in\I$ and some open neighborhood $U\subset Y$ of the point $y_0$.

Let $\{K'_{j}\}_{j\in\w}$ be an enumeration of the countable subfamily $\K(C)=\{K\in\K:C\subset K\}\subset\K$. For every $j\in\w$ let
$K_{j}=\bigcap_{i\le j}K'_{i}$. It follows that the decreasing sequence of sets  $(K_{j})_{j=1}^\infty$ converges to $C$ in the sense that each neighborhood $O_C\subset X$ of $C$ contains all but finitely many sets $K_{j}$.

Consider the countable subfamily $\mathcal P(U)=\{P\in\mathcal P:P\subset U\}$ and let $\mathcal P(U)=\{P_j'\}_{j\in\w}$ be its enumeration. For every $j\in\w$ let $P_{j}=\bigcup_{i\le j}P_{i}'$. It follows that $(P_{j})_{j\in\w}$ is an increasing sequence of sets.

Then the sets
$$[K_{j};P_{j}]\in[\kern-1.5pt[\mathcal K;\mathcal P]\kern-1.5pt]$$form an increasing sequence of sets in the function space $C_{c}(X,Y)$.

By (the proof of) Proposition~\ref{vazlyvo}, the countable Pytkeev network $\mathcal N$ at $y_0$ is a local $k$-network at $y_0$.
Consequently, there exists an open neighborhood $V\subset U$ of $x$ such that each compact subset $K\subset V$ is contained in the union $P_j=\bigcup_{i\le j}P_i'$ for some $j\in\w$.

\begin{claim}\label{cl1} $[C;V]\subset \bigcup_{j\in\w}[K_j;P_j]$.
\end{claim}

\begin{proof} Assuming the opposite, we can find a function $g\in [C;V]$ such that for every $j\in\w$ we get $g\notin [K_j;P_j]$, which yields a point $x_j\in K_j$ with $g(x_j)\notin P_j$.

Since the sequence of sets $(K_j)_{j\in\w}$ converges to $C$, there is a number $j'\in\w$ such that $K_j\subset g^{-1}(V)$ for all $j\ge j'$. Moreover, the set $C'=C\cup\{x_j:j\ge j'\}$ is compact. Since $X$ is a Hausdorff space with countable network, the compact set $C'$ is metrizable and hence the sequence $(x_j)_{j\ge j'}$ contains a subsequence $(x_{j_i})_{i\in\w}$ convergent to some point $x_\infty\in C$. By the continuity of $g$, the sequence $(g(x_{j_i}))_{i\in\w}$ converges to the point $g(x_\infty)\in g(C)\subset V$. Then the subset $K=\{g(x_\infty)\}\cup\{g(x_{j_i})\}_{i\in\w}$ of $V$ is compact and by the choice of the neighborhood $V$, it is contained in some set $P_j=\bigcup_{i\le j}P_{i}$, which is not possible as $g(x_{j_i})\notin P_j$ for $j_i>j$. This contradiction completes the proof of the claim.
\end{proof}

Replacing the set $\A$ by the intersection $\A\cap[C;V]$, we can assume that $\A\subset[C;V]\subset\bigcup_{j\in\w}[K_j;P_j]$. The following claim completes the proof of Theorem~\ref{main}.

\begin{claim}\label{cl2} If $f$ is an accumulation point of $\A$, then  for some $j\in\w$ the intersection $[K_j;P_j]\cap \A$ is infinite.
\end{claim}

\begin{proof} Assume conversely that for every $j\in\w$ the intersection $\A_j=[K_j;P_j]\cap\A$ is finite.
Then $\A=\bigcup_{j\in\w}\A\cap[K_j;P_j]=\bigcup_{j\in\w}\A_j$ is the countable union of an increasing sequence $(\A_j)_{j\in\w}$ of finite subsets of $C_\I(X,Y)$.

To each function $\alpha\in\A\setminus\A_0$ assigns a unique number $j_\alpha\in\w$ such that $\alpha\in\A_{j_\alpha+1}\setminus\A_{j_\alpha}=\A_{j_\alpha+1}\setminus[K_{j_\alpha};P_{j_\alpha}]$ and hence $\alpha(x_\alpha)\notin P_{j_\alpha}$ for some point $x_\alpha\in K_{j_\alpha}$.
It follows that the number sequence $(j_\alpha)_{\alpha\in\A\setminus\A_0}$ tends to infinity in the sense that for any number $l\in\w$ the set $\{\alpha\in\A\setminus\A_0:j_\alpha\le l\}$ is finite.

The convergence of the sequence $(K_j)_{j\in\w}$ to $C$ implies that the set $C'=C\cup\{x_\alpha:\alpha\in\A\setminus\A_0\}$ is compact. The discrete completeness of the ideal $\I$ guarantees that the compact set $C'$ belongs to $\I$ (as $C'\setminus C\subset \{x_\alpha\in\A\setminus\A_0\}$ is a countable discrete subspace of $X$).

We claim that the point $y_0$ is not accumulation point of the set $B=\{\alpha(x_\alpha):\alpha\in\A\setminus\A_0\}\subset Y\setminus\{y_0\}$. Otherwise, there would exist a set $P_j'\in\mathcal P(U)$ such
$P'_j\cap B$ is infinite, which is not possible as $P'_j\cap\{\alpha(x_\alpha):\alpha\in\A\setminus\A_0\}\subset\{\alpha(x_\alpha):\alpha\in\A_j\setminus\A_0\}$ is finite. Since $y_0$ fails to be an accumulation point of the set $B$, there exists an open neighborhood $W\subset V$ of $y_0$ in $Y$ such that $W\cap B$ is finite. Then the set $[C';W]$ is an open neighborhood of the constant function $f$ in the function space $C_\I(X,Y)$ which has finite intersection with the set $\A\setminus\A_0$. But this contradicts our assumption that $f$ is an accumulation point of $\A$.
\end{proof}
\end{proof}

\begin{remark} In \cite{MSak2} M.~Sakai proved that for any uncountable Tychonoff space $X$ the function space $C_p(X,\IR)$ has uncountable $\cs^*$-character and hence fails to have the strong Pytkeev property. This shows that the discrete-completeness of the ideal is an essential requirement in Theorem~\ref{main}.
\end{remark}

Since the ideal of all compact subsets of a space $X$ is discretely-complete, Theorem~\ref{main} implies:

\begin{corollary}\label{c:main}  Let be an $\aleph_0$-space (more generally, a Hausdorff space with countable $k$-network). If a topological space $Y$ has the strong Pytkeev property at a point $e\in Y$, then the function space $C_k(X,Y)$ has the strong Pytkeev property at the constant function $f:X\to\{e\}\subset Y$.
\end{corollary}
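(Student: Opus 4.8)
The plan is to obtain this as a direct specialization of Theorem~\ref{main}. First I would take $\I$ to be the ideal of all compact subsets of $X$. By the definition given above, the $\I$-open topology determined by this ideal is exactly the compact-open topology, so $C_\I(X,Y)$ and $C_k(X,Y)$ are literally the same topological space, and the constant function $f\colon X\to\{e\}\subset Y$ is the same distinguished point under both descriptions.

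The only thing left to verify is that this particular ideal satisfies the hypothesis of Theorem~\ref{main}, namely discrete-completeness. This is immediate and was already recorded in the excerpt: if $\I$ is the ideal of \emph{all} compact subsets, then for any compact sets $A,B\subset X$ with $A\setminus B$ a countable discrete subspace, we automatically have $A\in\I$ simply because $A$ is compact, regardless of whether $B\in\I$. Thus the discrete-completeness condition holds trivially for this choice of $\I$.

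With $\I$ so chosen and $y_0=e$, all hypotheses of Theorem~\ref{main} are in force (recall that $X$ is an $\aleph_0$-space, or more generally a Hausdorff space with countable $k$-network, and that $Y$ has the strong Pytkeev property at $e$). Applying the theorem then yields that $C_\I(X,Y)=C_k(X,Y)$ has the strong Pytkeev property at $f$, which is precisely the assertion of the corollary. I expect no genuine obstacle at this stage: the substantive work—constructing the countable Pytkeev network and establishing Claims~\ref{cl1} and~\ref{cl2}—is already contained in the proof of Theorem~\ref{main}, and here one only needs to match the compact-open topology with the $\I$-open topology for the full ideal of compact sets.
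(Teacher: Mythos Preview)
Your proposal is correct and follows exactly the paper's own approach: the paper simply notes that the ideal of all compact subsets of $X$ is discretely-complete and then invokes Theorem~\ref{main}. There is nothing to add.
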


At presence of topological homogeneity of the function space $C_\I(X,Y)$, Theorem~\ref{main} allows to establish the Pytkeev property at each point of $C_\I(X,Y)$ (not only at a constant function).
Let us recall that a topological space $X$ is {\em topologically homogeneous} if for any points $x,y\in X$ there is a homeomorphism $h:X\to X$ such that $h(x)=y$.

\begin{corollary}\label{c:homoPyt} Let $X$ be an $\aleph_0$-space, $\I$ be a discretely-complete ideal of compact subsets of $X$, and $Y$ be a topological space with the strong Pytkeev property at some point $e\in Y$. If the function space $C_\I(X,Y)$ is topologically homogeneous, then $C_\I(X,Y)$ has the strong Pytkeev property.
\end{corollary}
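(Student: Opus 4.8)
The plan is to combine Theorem~\ref{main} with the observation that the strong Pytkeev property is a topological invariant, so that the homeomorphisms supplied by topological homogeneity transport the property from the constant function to every point of $C_\I(X,Y)$.

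First I would invoke Theorem~\ref{main}: since $X$ is an $\aleph_0$-space, $\I$ is a discretely-complete ideal of compact subsets of $X$, and $Y$ has the strong Pytkeev property at $e$, the function space $C_\I(X,Y)$ has the strong Pytkeev property at the constant function $f:X\to\{e\}\subset Y$. Fix a countable Pytkeev network $\mathcal N$ at $f$.

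The crucial (and essentially only) step is the following transport lemma: if $h:Z\to Z$ is a homeomorphism of a topological space and $\mathcal N$ is a Pytkeev network at a point $z\in Z$, then $h(\mathcal N)=\{h(N):N\in\mathcal N\}$ is a Pytkeev network at $h(z)$. To verify this, take any neighborhood $O$ of $h(z)$ and any subset $A\subset Z$ accumulating at $h(z)$. Then $h^{-1}(O)$ is a neighborhood of $z$ and, since $h$ is a homeomorphism, $h^{-1}(A)$ accumulates at $z$; so there is $N\in\mathcal N$ with $N\subset h^{-1}(O)$ and $N\cap h^{-1}(A)$ infinite. Applying $h$ and using its injectivity yields $h(N)\subset O$ with $h(N)\cap A=h\big(N\cap h^{-1}(A)\big)$ infinite, as required. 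In particular $h(\mathcal N)$ is again countable.

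Finally, given an arbitrary point $g\in C_\I(X,Y)$, topological homogeneity provides a homeomorphism $h:C_\I(X,Y)\to C_\I(X,Y)$ with $h(f)=g$. By the transport lemma $h(\mathcal N)$ is a countable Pytkeev network at $g$, so $C_\I(X,Y)$ has the strong Pytkeev property at $g$; as $g$ was arbitrary, $C_\I(X,Y)$ has the strong Pytkeev property at each point. I do not anticipate a genuine obstacle here: the only point requiring care is the routine check that a homeomorphism preserves both the accumulation relation and the containment and infinite-intersection conditions defining a Pytkeev network, which is immediate from injectivity and bicontinuity.
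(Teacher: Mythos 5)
Your proposal is correct and matches the paper's (essentially unstated) argument: the paper derives this corollary directly from Theorem~\ref{main} plus the observation that topological homogeneity transports the strong Pytkeev property from the constant function to every point. Your explicit verification of the transport lemma is exactly the routine check the paper leaves implicit.
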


This corollary motivates the problem of detecting spaces $Y$ for which the function space $C_\I(X,Y)$ is topologically homogeneous for any $\aleph_0$-space $X$ endowed with a  discretely-complete ideal $\I$ of compact subsets. Taking into account that for any topological group $Y$ the space $C_\I(X,Y)$ is topological group (cf. Proposition~\ref{p:magmaC}), we get:

\begin{corollary}\label{c:groupPyt} Let $X$ be an $\aleph_0$-space and $\I$ be a discretely complete ideal of compact subsets of $X$. For any topological group $Y$ with the strong Pytkeev property, the
function space $C_\I(X,Y)$ has the strong Pytkeev property.
\end{corollary}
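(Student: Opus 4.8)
The plan is to deduce this corollary from the already proved Theorem~\ref{main} together with the homogeneity mechanism packaged in Corollary~\ref{c:homoPyt}, exploiting the fact that $C_\I(X,Y)$ carries a natural topological group structure when $Y$ is a topological group. First I would observe that, being a topological group, $Y$ has the strong Pytkeev property at every one of its points, and in particular at its neutral element $e\in Y$. Applying Theorem~\ref{main} with $y_0=e$ then yields that the function space $C_\I(X,Y)$ has the strong Pytkeev property at the constant function $f\colon X\to\{e\}\subset Y$.

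Next I would invoke Proposition~\ref{p:magmaC}, according to which $C_\I(X,Y)$, endowed with the pointwise group operations inherited from $Y$, is itself a topological group. Its neutral element is precisely the constant function $f\colon X\to\{e\}$, so the preceding step says that $C_\I(X,Y)$ has the strong Pytkeev property at the identity of the group. Since every topological group $G$ is topologically homogeneous --- for any $g\in G$ the left translation $L_g\colon h\mapsto g\cdot h$ is a homeomorphism of $G$ with $L_g(1)=g$ --- the space $C_\I(X,Y)$ is topologically homogeneous.

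Finally I would apply Corollary~\ref{c:homoPyt} with the point $e\in Y$: it converts the strong Pytkeev property at a single point of a topologically homogeneous function space into the strong Pytkeev property at every point. This gives that $C_\I(X,Y)$ has the strong Pytkeev property, completing the argument.

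As for the main obstacle: given the earlier results there is essentially nothing left to prove analytically, since the entire substance is already contained in Theorem~\ref{main} and in the continuity of the pointwise operations asserted by Proposition~\ref{p:magmaC}. If one wished to build the argument from scratch, the delicate point would be verifying that pointwise multiplication and inversion are continuous in the $\I$-open topology --- that is, Proposition~\ref{p:magmaC} --- rather than the homogeneity transfer, which reduces to the routine observation that a homeomorphism $h$ carries a Pytkeev network $\mathcal N$ at $x$ to the Pytkeev network $\{h(N):N\in\mathcal N\}$ at $h(x)$.
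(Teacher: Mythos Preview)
Your argument is correct and follows essentially the same route as the paper: the paper simply observes that for a topological group $Y$ the function space $C_\I(X,Y)$ is a topological group (cf.\ Proposition~\ref{p:magmaC}), hence topologically homogeneous, and then applies Corollary~\ref{c:homoPyt}. Your separate invocation of Theorem~\ref{main} is harmless but redundant, since Corollary~\ref{c:homoPyt} already absorbs that step.
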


Since the ideal of all compact subset of any topological space is discretely-complete Corollary~\ref{c:groupPyt} implies:

\begin{corollary}\label{c:groupPytc} For any $\aleph_0$-space $X$ and topological group $Y$ with the strong Pytkeev property, the
function space $C_k(X,Y)$ has the strong Pytkeev property.
\end{corollary}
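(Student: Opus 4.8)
The plan is to recognize the final statement as simply the specialization of Corollary~\ref{c:groupPyt} to the ideal of \emph{all} compact subsets. By definition $C_k(X,Y)$ is precisely $C_\I(X,Y)$ when $\I$ is taken to be the family of all compact subsets of $X$, so the only thing to check before quoting Corollary~\ref{c:groupPyt} is that this ideal is discretely-complete. This is immediate: if $A,B$ are compact subsets of $X$ with $A\setminus B$ a countable discrete subspace and $B\in\I$, then $A\in\I$ as well, because $A$ is compact and \emph{every} compact set belongs to $\I$. With this observation the hypotheses of Corollary~\ref{c:groupPyt} are met and the conclusion follows directly.

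If instead one wished to reprove the statement from scratch rather than invoke Corollary~\ref{c:groupPyt}, the strategy would be to reassemble the chain of earlier results. First, Theorem~\ref{main}, applied with $\I$ the ideal of all compact subsets, yields that $C_k(X,Y)$ has the strong Pytkeev property at the constant function $f\colon X\to\{e\}\subset Y$; since $Y$ is a topological group with the strong Pytkeev property at every point, we may take $e$ to be the identity. Next one needs that $C_k(X,Y)$ is topologically homogeneous. This holds because for a topological group $Y$ the function space $C_k(X,Y)$ is itself a topological group under the pointwise operations (cf.\ Proposition~\ref{p:magmaC}), and every topological group is topologically homogeneous via its left translations. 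Finally, the strong Pytkeev property is a local topological property, so homogeneity transports it from the single constant function $f$ to every point of $C_k(X,Y)$; this last step is exactly the content of Corollary~\ref{c:homoPyt}.

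The main (indeed essentially the only) difficulty is already contained in Theorem~\ref{main}, which is proved above; the passage from it to the present corollary is routine. The one point requiring care is the assertion that $C_k(X,Y)$ is genuinely a topological group, that is, that the group operations remain continuous in the compact-open topology. I expect this to be dispatched by Proposition~\ref{p:magmaC}, which should establish that spaces of continuous maps into a topological magma inherit the algebraic structure continuously. Apart from invoking that result, no further work is needed.
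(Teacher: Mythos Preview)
Your proposal is correct and matches the paper's own argument: the paper simply observes that the ideal of all compact subsets of any space is discretely-complete and then invokes Corollary~\ref{c:groupPyt}. Your additional unpacking via Theorem~\ref{main}, Proposition~\ref{p:magmaC}, and Corollary~\ref{c:homoPyt} is accurate but unnecessary for the formal deduction.
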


In Section~\ref{s4} we shall prove that Corollaries~\ref{c:groupPyt} and \ref{c:groupPytc} remains true for any rectifiable space $Y$.

\section{Preserving the strong Pytkeev property by Tychonoff and small box-products}\label{s3}

In this section we shall detect some topological operations preserving the class of topological spaces with the strong Pytkeev property. We recall that a {\em pointed space} is a topological space $X$ with a distinguished point, which will be denoted by $*_X$.

By the {\em Tychonoff product} of pointed topological spaces $(X_\alpha,*_{X_\alpha})$ we understand the Tychonoff product $\prod_{\alpha\in A}X_\alpha$ with the distinguished point $(*_{X_\alpha})_{\alpha\in A}$. The {\em box-product} $\square_{\alpha\in A}X_\alpha$ of the spaces $X_\alpha$, $\alpha\in A$, is their Cartesian product $\prod_{\alpha\in A}X_\alpha$ endowed with the box-topology generated by the products $\prod_{\alpha\in A}U_\alpha$ of open sets $U_\alpha\subset X_\alpha$, $\alpha\in A$.

The subset $$\cbox_{\alpha\in A}X_\alpha=\big\{(x_\alpha)_{\alpha\in A}\in\square_{\alpha\in A}X_\alpha:\{\alpha\in A:x_\alpha\ne *_{X_\alpha}\}\mbox{ is finite}\big\}$$of the box-product $\square_{\alpha\in A}X_\alpha$ is called the {\em small box-product} of the pointed topological spaces $X_\alpha$, $\alpha\in A$. It is a pointed topological space with distinguished point $(*_{X_\alpha})_{\alpha\in A}$.

The subspace
$$\coprod_{\alpha\in A}X_\alpha =\big\{(x_\alpha)_{\alpha\in A}\in\prod_{\alpha\in A}X_\alpha:\big|\{\alpha\in A:x_\alpha\ne *_{X_\alpha}\}\big|\le1\big\}$$
of the Tychonoff product $\coprod_{\alpha\in A}X_n$ is called the {\em Tychonoff bouquet} of the pointed spaces $X_\alpha$, $\alpha\in A$. The same set $\coprod_{\alpha\in A}X_\alpha$ endowed with the box topology inherited from $\square_{\alpha\in A}X_\alpha$ will be called the {\em box-bouquet} of the pointed topological spaces $X_\alpha$, $\alpha\in A$, and will be denoted by $\bigvee_{\alpha\in A}X_\alpha$.

\begin{theorem} If $X_n$, $n\in\w$, are pointed topological spaces with the strong Pytkeev property at their distinguished points, then
the Tychonoff bouquet $\coprod_{n\in\w}X_n$ and the Tychonoff product $\prod_{n\in\w}X_n$ both have the strong Pytkeev property at their distinguished point $(*_{X_n})_{n\in\w}$.
\end{theorem}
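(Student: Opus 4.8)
The plan is to reduce both assertions to the single statement that the Tychonoff product $P=\prod_{n\in\w}X_n$ has the strong Pytkeev property at $*=(*_n)_{n\in\w}$, where $*_n:=*_{X_n}$. The reduction rests on the remark that the strong Pytkeev property at a point is inherited by every subspace containing that point: if $*\in Z\subseteq P$ and $\mathcal N$ is a Pytkeev network at $*$ in $P$, then $\{N\cap Z:N\in\mathcal N\}$ is one in $Z$, since a set $A\subseteq Z$ accumulates at $*$ in $Z$ if and only if it does so in $P$, and any $N\subseteq O_*$ with $A\cap N$ infinite gives $N\cap Z\subseteq O_*\cap Z$ with $(N\cap Z)\cap A=N\cap A$ infinite. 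As the Tychonoff bouquet $\coprod_{n\in\w}X_n$ is a subspace of $P$ containing $*$, it suffices to treat $P$.

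For each $n$ I would fix a countable Pytkeev network $\mathcal N_n$ at $*_n$, closed under finite unions; by Proposition~\ref{vazlyvo} it is a local $k$-network at $*_n$, and closure under finite unions makes it a network at $*_n$. As the candidate network on $P$ I take the countable family
\[\mathcal N=\big\{\textstyle\prod_{i<m}N_i\times\prod_{i\ge m}X_i:\ m\in\w,\ N_i\in\mathcal N_i\ \text{for}\ i<m\big\}.\]
Given a subset $A\subseteq P$ accumulating at $*$ and a neighborhood $O_*$, I may assume $O_*=\{x:x_i\in U_i,\ i<m_0\}$. Put $W_i=\bigcup\{N\in\mathcal N_i:N\subseteq U_i\}$, which is a neighborhood of $*_i$ by the argument in the proof of Proposition~\ref{vazlyvo}; enumerating $\{N\in\mathcal N_i:N\subseteq U_i\}$ and letting $P^i_j$ be the union of its first $j$ members, I get $P^i_j\in\mathcal N_i$, $P^i_j\subseteq U_i$, with $P^i_j\uparrow W_i$ as $j\to\infty$. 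Setting $\Psi_j=\prod_{i<m_0}P^i_j\times\prod_{i\ge m_0}X_i\in\mathcal N$ gives an increasing sequence with each $\Psi_j\subseteq O_*$. Replacing $A$ by $A\cap(\prod_{i<m_0}W_i\times\prod_{i\ge m_0}X_i)$ (still accumulating at $*$), a finite maximum over the $m_0$ constrained coordinates shows $A\subseteq\bigcup_j\Psi_j$.

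The heart of the matter is then to show that some $\Psi_j\cap A$ is infinite. Arguing by contradiction, suppose $A_j:=\Psi_j\cap A$ is finite for all $j$; then $A=\bigcup_jA_j$ is an increasing union of finite sets, and to each $\alpha\in A\setminus A_0$ I attach the index $j_\alpha$ with $\alpha\in A_{j_\alpha+1}\setminus A_{j_\alpha}$, so some coordinate $i_\alpha<m_0$ satisfies $\alpha_{i_\alpha}\notin P^{i_\alpha}_{j_\alpha}$, while $j_\alpha\to\infty$ in the sense that each $\{\alpha:j_\alpha\le l\}\subseteq A_{l+1}$ is finite. Splitting $A\setminus A_0=\bigcup_{i<m_0}A_{(i)}$ according to a choice of witnessing coordinate, I fix an $i$ with $A_{(i)}$ infinite and consider $B_i=\{\alpha_i:\alpha\in A_{(i)}\}\subseteq W_i$. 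If $B_i$ accumulated at $*_i$, the Pytkeev property of $X_i$ would produce $N\in\mathcal N_i$ with $N\subseteq U_i$ and $B_i\cap N$ infinite; but $N\subseteq P^i_{j^*}$ for some $j^*$, and every witness with $j_\alpha\ge j^*$ has $\alpha_i\notin P^i_{j_\alpha}\supseteq N$, so only finitely many $\alpha\in A_{(i)}$ could have $\alpha_i\in N$, a contradiction. Hence $B_i$ does not accumulate at $*_i$, and I may pick a neighborhood $V_i\subseteq U_i$ of $*_i$ with $B_i\cap V_i$ finite; moreover each fibre $\{\alpha\in A_{(i)}:\alpha_i=v\}$ with $v\in V_i$ is finite, because $v\in W_i$ lies in some $P^i_{j_0}$ and the witness condition forces $j_\alpha<j_0$. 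Thus $A_{(i)}\cap\{x:x_i\in V_i\}$ is finite; taking $V_i=U_i$ for the remaining $i$, the neighborhood $O_*'=\{x:x_i\in V_i,\ i<m_0\}$ of $*$ satisfies $A\cap O_*'\subseteq A_0\cup\bigcup_{i<m_0}(A_{(i)}\cap\{x:x_i\in V_i\})$, which is finite, contradicting the accumulation of $A$ at $*$.

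The step I expect to be the crux, and the one on which the whole scheme depends, is confining the witnessing coordinates to a \emph{fixed finite} set of indices. This is secured by never enlarging the number $m_0$ of constrained coordinates (the ones already restricted by $O_*$) and letting the network members grow only \emph{within} those coordinates; all witnesses then live in the finitely many spaces $X_0,\dots,X_{m_0-1}$, so one application of the Pytkeev property in each, combined with the increasing structure of the $P^i_j$, closes the argument. Had I instead restricted more and more coordinates, the witnessing indices would be unbounded and spread across infinitely many factors, and no single basic neighborhood of $*$ could absorb them.
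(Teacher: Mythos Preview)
Your argument is essentially correct, but it proceeds quite differently from the paper. The paper treats the bouquet \emph{first}, exhibiting an explicit countable Pytkeev network at $*$ in $\coprod_{n}X_n$, and then obtains the product by identifying $\prod_n X_n$ with a subspace of the function space $C_k(\omega,\coprod_n X_n)$ and invoking Corollary~\ref{c:main}. You go the other way around: you prove the product case by a direct diagonal argument and then deduce the bouquet by heredity. Your direct argument is in effect an unpacking of the proof of Theorem~\ref{main} in the very special situation $X=\omega$, so the two approaches are cousins; yours is more self-contained, while the paper's leverages the function-space machinery already in place.

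Two small points to tighten. First, the claim that $W_i=\bigcup\{N\in\mathcal N_i:N\subseteq U_i\}$ is a \emph{neighborhood} of $*_i$ relies (as in the proof of Proposition~\ref{vazlyvo}) on the harmless normalization that every $N\in\mathcal N_i$ containing $*_i$ also contains $\ddot{*}_i$; you should state that you make this assumption along with closure under finite unions. Second, in the contradiction step you ``fix an $i$ with $A_{(i)}$ infinite'' and produce $V_i$, then set $V_{i'}=U_{i'}$ for the remaining $i'$. As written this is not enough: for those $i'$ you get $A_{(i')}\cap\{x:x_{i'}\in V_{i'}\}=A_{(i')}$, which need not be finite. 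The fix is immediate---run the same argument for \emph{every} $i<m_0$ with $A_{(i)}$ infinite to obtain $V_i$, and take $V_i=U_i$ only when $A_{(i)}$ is finite; then each term $A_{(i)}\cap\{x:x_i\in V_i\}$ is finite and the union is finite, giving the desired contradiction.
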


\begin{proof} For every $n\in\w$, fix a countable Pytkeev network $\mathcal N_n$ at the distinguished point $*_{X_n}$ of the space $X_n$. Identify each space $X_n$, $n\in\w$, with the subspace
 $\big\{(x_k)_{k\in\w}\in\coprod_{k\in\w}X_k:\forall k\ne n\;\; (x_k=*_{X_k})\big\}$ of the Tychonoff bouquet $X=\coprod_{k\in\w}X_k$  and observe that the family $$\mathcal N=\big(\bigcup_{n\in\w}\mathcal N_k\big)\cup \big\{\textstyle{\bigcup_{k\ge n}X_k:n\in\w}\big\}$$ is a countable Pytkeev network at the distinguished point $*_X$ of $X$, which means that the space $X$ has the strong Pytkeev property at $*_X$.

Observe that the Tychonoff product $\prod_{n\in\w}X_n$ can be identified with the (pointed) subspace $\{f\in C_k(\w,X):\forall n\in\w\;\;f(n)\in X_n\}$ of the function space $C_k(\w,X)$ whose distinguished point is the constant function $f_*:\w\to\{*_X\}\subset X$. By Corollary~\ref{c:main}, the space $C_k(\w,X)$ has the strong Pytkeev property at its distinguished point $f_*$. Then the pointed subspace $\prod_{n\in\w}X_n$ of  $C_k(\w,X)$ has the strong Pytkeev property at its distinguished point too.
\end{proof}

In the same way we can prove the preservation of the strong Pytkeev property by small box-products.

\begin{proposition} If $X_n$, $n\in\w$, are pointed topological $T_1$-spaces with the strong Pytkeev property at their distinguished points, then
the box-bouquet $\bigvee_{n\in\w}X_n$ and the small box-product $\cbox_{n\in\w}X_n$ both have the strong Pytkeev property at their distinguished points.
\end{proposition}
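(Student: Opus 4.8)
The plan is to imitate the two-step proof of the preceding theorem, but a genuinely new argument is needed. For the Tychonoff product one passes, through Corollary~\ref{c:main}, to $C_k(\w,\coprod_{n}X_n)$, whose pointwise topology \emph{is} the product topology; this device is unavailable for box-products, since on $C(\w,Z)$ with $\w$ discrete the compact-open topology is exactly the pointwise topology, whereas the box topology on $\cbox_{n\in\w}X_n$ is strictly finer than the product topology and is not the $\I$-open topology of any ideal $\I$. So I would build a Pytkeev network at $*=(*_{X_n})_{n\in\w}$ by hand. Fixing for each $n$ a countable Pytkeev network $\N_n$ at $*_{X_n}$ closed under finite unions (hence, by the Remark on $\cs^*$- and $\cs$-networks, also a $\cs$-network at $*_{X_n}$), I propose the countable family
$$\N=\Big\{\textstyle\prod_{n\in S}P_n\times\prod_{n\notin S}\{*_{X_n}\}:S\in[\w]^{<\w},\ P_n\in\N_n\ \text{for }n\in S\Big\}.$$
Because $\bigvee_{n\in\w}X_n$ is exactly the subspace of $\cbox_{n\in\w}X_n$ of points of support $\le1$, and the strong Pytkeev property at a point passes to subspaces containing that point, it suffices to treat the small box-product.

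Fix a box-neighbourhood $O=\prod_n U_n$ of $*$ and a set $\A$ accumulating at $*$; I may assume $\A\subseteq O$ and must produce $S$ and sets $P_n\subseteq U_n$ in $\N_n$ with $\prod_{n\in S}P_n\times\{*\}$ meeting $\A$ infinitely. The first step, and the only place the $T_1$ hypothesis is used, is a localisation claim generalising the box-bouquet argument: \emph{if $\A$ accumulates at $*$, then $\pr_n(\A)$ accumulates at $*_{X_n}$ for some $n$}. Otherwise, by $T_1$, each coordinate $n$ would carry an open $V_n\ni *_{X_n}$ with $V_n\cap(\pr_n(\A)\setminus\{*_{X_n}\})=\emptyset$; then $\prod_n(U_n\cap V_n)$ would be a neighbourhood of $*$ missing $\A\setminus\{*\}$, since any $a\neq *$ has a coordinate $n$ in its support with $a_n\in\pr_n(\A)\setminus\{*_{X_n}\}$, whence $a_n\notin V_n$ --- contradicting accumulation.

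Having located a coordinate $n_1$ with $\pr_{n_1}(\A)$ accumulating at $*_{X_{n_1}}$, the Pytkeev property of $\N_{n_1}$ supplies $P_{n_1}\subseteq U_{n_1}$ meeting $\pr_{n_1}(\A)$ infinitely. I would then recurse, pinning one coordinate at a time. The delicate point is how to pin: either (collapse) some value $v\in P_{n_1}\cap\pr_{n_1}(\A)$ has an infinite fibre $\{a\in\A:a_{n_1}=v\}$, and I replace $\A$ by this fibre, which freezes coordinate $n_1$ at $v$; or (peel) every such fibre is finite, and I replace $\A$ by $\{a\in\A:a_{n_1}\in P_{n_1}\}$ and delete coordinate $n_1$. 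Once the working set has support confined to the finite set $S$ of pinned coordinates it lies in $\prod_{n\in S}P_n\times\{*\}$ and is infinite, proving the claim; the simultaneous fit of the finitely many pinned coordinates is guaranteed by choosing each $P_n$, via the $\cs$-network property, to contain a tail of the relevant convergent behaviour, so that the finitely many coordinatewise conditions intersect in an infinite set.

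The main obstacle is that, in contrast with the product topology, accumulation in a box-product is not witnessed on any finite sub-product: one can exhibit $\A$ accumulating at $*$ with members of unbounded support size such that for every $m$ the slice of points supported in $\{0,\dots,m-1\}$ fails to accumulate at $*$, and indeed such that no bounded-support subfamily accumulates at $*$. Hence a naive induction on support size is impossible, and the whole force of the argument must rest on the collapse step: when $\A$ accumulates at $*$ yet no bounded-support subfamily does, the value of the accumulating coordinate must itself constrain the support (otherwise a bounded slice would already accumulate), so freezing that coordinate at a well-chosen small value $v$ genuinely lowers the support. Turning this heuristic into a well-founded recursion that terminates with an infinite capture --- keeping the working family accumulating at the current distinguished point after each collapse or peel --- is, I expect, the crux of the proof, and the interplay of the $T_1$ localisation with the $\cs$-network tails is exactly what makes it run.
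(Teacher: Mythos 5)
Your proposal is not a complete proof: you yourself defer ``the crux'' --- turning the collapse/peel heuristic into a well-founded recursion --- and that is precisely where the argument breaks down. After a collapse step that freezes a coordinate $n_1$ at a value $v\ne *_{X_{n_1}}$, the new working family $\{a\in\A:a_{n_1}=v\}$ cannot accumulate at $*$ at all (by $T_1$, the box neighborhood whose $n_1$-th factor is $X_{n_1}\setminus\{v\}$ misses it entirely), so your localisation claim can no longer be applied and the stated invariant (``keeping the working family accumulating at the current distinguished point'') is unachievable as written; no termination argument is offered either. There is also no argument that the finitely many coordinatewise captures hold simultaneously on a common infinite subfamily: the $\cs$-network ``tails'' only help for convergent sequences, and nothing forces the relevant coordinate traces to converge. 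So the proposed family $\N$ is never shown to be a Pytkeev network; what you have is a plan, not a proof.

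Moreover, the premise on which you abandon the function-space route is mistaken, and this is the real gap in conception. The paper proves the proposition by the same two-step device as for Tychonoff products, with two twists: the domain is the one-point compactification $\w+1$ of $\w$ (not discrete $\w$), and the target is the box-bouquet $X=\bigvee_{n\in\w}X_n$ (not the Tychonoff bouquet). One first checks directly that $\bigcup_{n\in\w}\N_n$ is a Pytkeev network at $*$ of the box-bouquet (essentially your localisation claim). Then the small box-product is identified with the subspace $\{f\in C_k(\w+1,X):f(\infty)=*_X,\ f(n)\in X_n\}$: the $T_1$ hypothesis makes continuity at $\infty$ equivalent to finiteness of the support, and the single compact set $\w+1$ produces, via $[\w+1;\bigvee_n U_n]$, exactly the box neighborhoods $\prod_n U_n$ of $*$ --- so the compact-open topology does induce the box topology where it matters, contrary to your claim that no $\I$-open topology can. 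Corollary~\ref{c:main} then finishes the proof in one line. Note also that the paper's dependency runs from the bouquet to the box-product, the reverse of yours; your reduction of the bouquet case to the box-product case is logically fine but places the easy case downstream of the hard one you have not established.
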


\begin{proof} For every $n\in\w$ fix a countable Pytkeev network $\mathcal N_n$ at the distinguished point $*_{X_n}$ of the space $X_n$. Identify each space $X_n$ with the subspace
 $\big\{(x_k)_{k\in\w}\in\textstyle{\bigvee_{k\in\w}}X_k:\forall k\ne n\;\; (x_k=*_{X_k})\big\}$ of the box-bouquet $X=\bigvee_{k\in\w}X_k$.
It is easy to check that the countable family $$\mathcal N=\bigcup_{n\in\w}\mathcal N_k$$ is a Pytkeev network at the distinguished point $*_X$ of $X$, which means that $X$ has the strong Pytkeev property at $*_X$.

Let $\w+1=\w\cup\{\infty\}$ be the one-point compactification of the (discrete) space $\w$ of non-negative integers. Observe that the small box-product $\cbox_{n\in\w}X_n$ is homeomorphic to the (pointed) subspace
$$\{f\in C_k(\w+1,X):f(\infty)=*_X\mbox{ and $\forall n\in\w$ $f(n)\in X_n$}\}$$of the function space $C_k(\w+1,X)$.
By Corollary~\ref{c:main}, the function space $C_k(\w+1,X)$ has the strong Pytkeev property at the constant function $f_*:\w\to\{*_X\}$, which implies that the subspace $\cbox_{n\in\w}X_n$ of $C_k(\w+1,X)$ has the strong Pytkeev property at its distinguished point $(*_{X_n})_{n\in\w}$.
\end{proof}

\section{Rectifiable spaces, continuously homogeneous spaces and topological lops}\label{s4}

In this section we return to the problem of detecting topologically homogeneous function spaces $C_\I(X,Y)$, mentioned in Section~\ref{s2}.

Let us recall that a topological space $X$ is {\em topologically homogeneous} if for any points $x,y\in X$ there exists a homeomorphism $h_{x,y}:X\to X$ such that $h_{x,y}(x)\to y$. If the homeomorphism $h_{x,y}$ can be chosen to depend continuously on $x$ and $y$ (in the sense that the map $:X^3\to X^3$, $H:(x,y,z)\mapsto (x,y,h_{x,y}(z))$ is a homeomorphism), then the space $X$ is called {\em continuously homogeneous}. More precisely, a topological space $X$ is continuously homogeneous if there exists a homeomorphism $H:X^3\to X^3$ such that $H(x,y,x)=(x,y,y)$ and $H(\{(x,y)\}\times X)=\{(x,y)\}\times X$ for any points $x,y\in X$. Continuously homogeneous spaces were introduced by Uspenski\u\i\ \cite{Us} (who called them strongly homogeneous spaces). In \cite[Proposition 15]{Us} Uspenski\u\i\ proved that a topological space $X$ is continuously homogeneous if and only if it is rectifiable. A topological space $X$ is called {\em rectifiable} if there is a point $e\in X$ and a homeomorphism $H:X^2\to X^2$
such that $H(x,e)=(x,x)$ and $H(\{x\}\times X)=\{x\}\times X$ for every $x\in X$.

In \cite{BR} Banakh and Repov\v s observed that a topological space $X$ is rectifiable (and hence continuous homogeneous) if and only if $X$ is homeomorphic to a topological lop. A topological lop is a special kind of a topological magma. A ({\em topological}) {\em magma} is a (topological) space $X$ endowed with a (continuous) binary operation $X\times X\to X$, $(x,y)\mapsto xy$. A point $e\in X$ of a magma is called {\em a unit} of $X$ if $xe=x=ex$ for any $x\in X$. It is standard to show that each magma can have at most one unit.

A topological magma $X$ is called a {\em topological lop} if it has a unit and the map $h:X\times X\to X\times X$, $h:(x,y)\to(x,xy)$, is a homeomorphism. This definition implies that for each point $a$ of a topological lop $X$ the left shift $h_a:X\to X$, $h_a:x\mapsto ax$, is a homeomorphism with $h_a(e)=a$. So, topological lops are topologically (and continuously) homogeneous. Topological lops are ``left'' generalizations of topological loops. By definition, a {\em topological loop} is a topological magma $X$ with unit such that the maps $X^2\to X^2$, $(x,y)\mapsto(x,xy)$, and  $X^2\to X^2$, $(x,y)\mapsto(x,yx)$, are homeomorphisms. This definition implies that in a topological loop, left and right shifts are homeomorphisms. More information on topological loops and topological lops can be found in \cite{HS}, \cite{HM},  \cite{BR}, and \cite{Ban}. It is clear that each topological group is a topological loop and each topological loop is a topological lop. Moreover, a topological lop is a topological group if and only if it is associative (see \cite[1.1.2]{Rob}).

So, we get the implications
$$
\xymatrix{
\mbox{associative topological lop }\ar@{<=>}[r]
&\mbox{ topological group }\ar@{=>}[r]
&\mbox{ topological loop }\ar@{=>}[r]
&\mbox{ topological lop.}
}$$

The following equivalence was proved in \cite{BR}.

\begin{proposition}\label{p:rectilop} For a topological space $X$ the following conditions are equivalent:
\begin{enumerate}
\item $X$ is rectifiable;
\item $X$ is continuously homogeneous;
\item $X$ is homeomorphic to a topological lop.
\end{enumerate}
\end{proposition}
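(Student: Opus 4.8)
Proposition~\ref{p:rectilop} asserts the equivalence of three conditions on a topological space $X$: being rectifiable, being continuously homogeneous, and being homeomorphic to a topological lop.

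The plan is to establish the cycle of implications $(1)\Rightarrow(3)\Rightarrow(2)\Rightarrow(1)$, extracting an explicit binary operation from each structure and checking that the maps built from it are again homeomorphisms. The unifying observation is that in all three definitions the distinguishing homeomorphism preserves every coordinate except the last, so it restricts to a self-homeomorphism of each fiber $\{*\}\times X$; this is what lets me read off continuous ``multiplication'' and ``division'' maps and transport them between the three formalisms.

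For $(2)\Rightarrow(1)$ I would start from a homeomorphism $G:X^3\to X^3$ of the form $G(x,y,z)=(x,y,g(x,y,z))$ with $g(x,y,x)=y$, fix an arbitrary point $e\in X$, and set $H(x,z)=(x,g(e,x,z))$. Since $G$ preserves the first two coordinates it maps the slice $\{e\}\times X\times X$ homeomorphically onto itself, so $H$ is a homeomorphism of $X^2$; the identity $g(a,b,a)=b$ gives $H(x,e)=(x,g(e,x,e))=(x,x)$, and $g(e,x,\cdot)$ being a bijection gives $H(\{x\}\times X)=\{x\}\times X$, so $X$ is rectifiable. For $(3)\Rightarrow(2)$ I would use the lop operation $\cdot$ together with the left division $x\backslash z$ defined by $h^{-1}(x,z)=(x,x\backslash z)$, where $h(x,y)=(x,xy)$; the two-sided unit yields $x\backslash x=e$ and $y\cdot e=y$, so $G(x,y,z):=(x,y,\,y\cdot(x\backslash z))$ satisfies $G(x,y,x)=(x,y,y\cdot(x\backslash x))=(x,y,y)$. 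Writing $G$ as the composite of $(x,y,z)\mapsto(x,y,x\backslash z)$ and $(x,y,w)\mapsto(x,y,y\cdot w)$, each a copy of $h^{\pm1}$ acting with one passive coordinate, shows that $G$ is a homeomorphism of $X^3$, so $X$ is continuously homogeneous.

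The delicate direction is $(1)\Rightarrow(3)$, and I expect it to be the main obstacle. From a rectifying homeomorphism $H(x,y)=(x,\mu(x,y))$ with $H(x,e)=(x,x)$ I immediately get a continuous operation $x\cdot y:=\mu(x,y)$ for which $h:(x,y)\mapsto(x,xy)$ is a homeomorphism and $e$ is a \emph{right} unit, i.e. $x\cdot e=x$. However, the definition of a topological lop requires a \emph{two-sided} unit $xe=x=ex$, and rectifiability supplies no left unit. To repair this I would consider the homeomorphism $\varphi:=\mu(e,\cdot):X\to X$ (the restriction of $H$ to the fiber over $e$), note that $\varphi(e)=e\cdot e=e$, and define the corrected operation $x*y:=x\cdot\varphi^{-1}(y)$. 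Then $x*e=x\cdot\varphi^{-1}(e)=x\cdot e=x$ and $e*y=\varphi(\varphi^{-1}(y))=y$, so $e$ is a genuine two-sided unit for $*$; moreover $(x,y)\mapsto(x,x*y)$ equals $H\circ(\mathrm{id}\times\varphi^{-1})$, a composite of homeomorphisms, whence $(X,*,e)$ is a topological lop and $X$ is homeomorphic (via the identity map) to a topological lop. Chaining the three implications yields the equivalence of $(1)$, $(2)$ and $(3)$; the only genuinely non-formal point is the manufacture of a two-sided unit just described, every remaining verification being a routine check that an explicitly defined composite of the given homeomorphisms with passive coordinates is again a homeomorphism.
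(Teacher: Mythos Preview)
Your argument is correct. The cycle $(1)\Rightarrow(3)\Rightarrow(2)\Rightarrow(1)$ works exactly as you describe, and your handling of the delicate point---manufacturing a two-sided unit from the rectifying homeomorphism by twisting with $\varphi^{-1}$ where $\varphi=\mu(e,\cdot)$---is the standard and correct fix.

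As for comparison with the paper: the paper does not actually supply a proof of this proposition. It states the result and attributes the equivalence $(1)\Leftrightarrow(2)$ to Uspenski\u\i\ \cite[Proposition~15]{Us} and the equivalence with $(3)$ to Banakh--Repov\v s \cite{BR}. So there is no ``paper's own proof'' to compare against; you have produced a complete self-contained argument where the paper only cites the literature. Your proof is essentially the one those references give (the trick of correcting a one-sided unit to a two-sided one via a fiber homeomorphism is exactly what appears in \cite{BR}), so there is no methodological divergence to discuss.

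One negligible caveat: in $(2)\Rightarrow(1)$ you fix an arbitrary point $e\in X$, tacitly assuming $X\ne\emptyset$. Since both rectifiability and the lop condition presuppose the existence of a distinguished point, while the continuously-homogeneous condition is vacuously satisfied by the empty space, one should either exclude the empty case or note it separately; this is purely cosmetic.
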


Rectifiable topological spaces share many common properties with topological groups. In particular, a rectifiable space is regular (and metrizable) if and only if it is satisfies the separation axiom $T_0$ (and is first countable), see \cite{Gul}.

Let $X,Y$ be topological spaces. Any continuous binary operation $p:Y\times Y\to Y$ induces a binary operation $P:C(X,Y)\times C(X,Y)\to C(X,Y)$ of the set $C(X,Y)$ of all continuous functions from $X$ to $Y$. The operation $P$ assigns to each pair $(f,g)\in C(X,Y)^2$ the function $P(f,g):X\to Y$, $P(f,g):x\mapsto p(f(x),g(x))$. The continuity of the function $P(f,g)$ follows from the observation that $P(f,g)=p\circ (f,g)$ where $(f,g):X\to Y\times Y$, $(f,g):x\mapsto (f(x),g(x))$. The following proposition shows that the induced operation $P$ is continuous with respect to any $\I$-open topology on $C(X,Y)$.

\begin{proposition}\label{p:magmaC} Let $X$ be a topological space and $\I$ be an ideal of compact Hausdorff subspaces of $X$. For any continuous binary operation $p:Y\times Y\to Y$ on a topological space $Y$ the induced operation $P:C_\I(X,Y)\times C_\I(X,Y)\to C_\I(X,Y)$ is continuous.
\end{proposition}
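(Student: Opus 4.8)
The plan is to verify continuity of $P$ at an arbitrary point $(f_0,g_0)\in C_\I(X,Y)^2$ by showing that the preimage $P^{-1}([K;W])$ of each subbasic neighbourhood $[K;W]$ of the value $P(f_0,g_0)$ is a neighbourhood of $(f_0,g_0)$. Since the sets $[K;W]$ with $K\in\I$ and $W\subset Y$ open form a subbase of the $\I$-open topology, this suffices. So I would fix $K\in\I$ and an open set $W\subset Y$ with $P(f_0,g_0)\in[K;W]$, that is, $p(f_0(x),g_0(x))\in W$ for every $x\in K$.

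First I would localize using the continuity of $p$. For each $x\in K$ the continuity of $p$ at $(f_0(x),g_0(x))$ yields open sets $V_x\ni f_0(x)$ and $V_x'\ni g_0(x)$ in $Y$ with $p(V_x\times V_x')\subset W$. Pulling these back through the continuous maps $f_0$ and $g_0$ gives an open neighbourhood $O_x\subset X$ of $x$ with $f_0(O_x)\subset V_x$ and $g_0(O_x)\subset V_x'$. As $K$ is compact, the open cover $\{O_x\cap K:x\in K\}$ of $K$ admits a finite subcover, indexed by points $x_1,\dots,x_n$.

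The key step is to replace the relatively open pieces $O_{x_i}\cap K$ by \emph{compact} pieces that still lie in $\I$, and this is exactly where the hypothesis that the members of $\I$ are compact \emph{Hausdorff} enters. Being compact Hausdorff, $K$ is normal, so I can shrink the finite open cover to a finite closed cover of $K$, producing sets $F_1,\dots,F_n$ closed in $K$ with $F_i\subset O_{x_i}\cap K$ and $\bigcup_{i\le n}F_i=K$. Each $F_i$, being closed in the compact space $K$, is compact, and since $F_i\subset K\in\I$, the defining closure property of an ideal of compact sets ($K\cap F_i\in\I$) gives $F_i\in\I$. I then set $\mathcal O_f=\bigcap_{i\le n}[F_i;V_{x_i}]$ and $\mathcal O_g=\bigcap_{i\le n}[F_i;V_{x_i}']$, which are basic $\I$-open neighbourhoods of $f_0$ and $g_0$, respectively, because $F_i\subset O_{x_i}$ forces $f_0(F_i)\subset V_{x_i}$ and $g_0(F_i)\subset V_{x_i}'$.

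Finally I would check that $P(\mathcal O_f\times\mathcal O_g)\subset[K;W]$. Given $f\in\mathcal O_f$, $g\in\mathcal O_g$ and a point $x\in K$, I choose $i$ with $x\in F_i$ (possible since the $F_i$ cover $K$); then $f(x)\in V_{x_i}$ and $g(x)\in V_{x_i}'$, whence $p(f(x),g(x))\in p(V_{x_i}\times V_{x_i}')\subset W$. Thus $P(f,g)(K)\subset W$, i.e.\ $P(f,g)\in[K;W]$, which shows $\mathcal O_f\times\mathcal O_g\subset P^{-1}([K;W])$ and completes the verification. The only genuinely delicate point is this compact shrinking together with the observation that the shrunk closed pieces remain in the ideal; it is precisely the Hausdorffness of the members of $\I$ (yielding normality) and the closure property $A\cap K\in\I$ that make it work, while the rest is the routine continuity-of-$p$ bookkeeping.
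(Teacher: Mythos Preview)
Your proof is correct and follows essentially the same route as the paper's: localize via continuity of $p$, take a finite subcover by compactness of $K$, shrink to a closed (hence compact) cover using that $K$ is compact Hausdorff, and form the finite intersections of subbasic sets. The paper phrases the shrinking step via regularity rather than normality and leaves the verification $F_i\in\I$ implicit, but the argument is the same.
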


\begin{proof} Fix any pair of functions $(f,g)\in C_\I(X,Y)^2$ and let $O_{fg}$ be a neighborhood of the function $fg=P(f,g)$ in $C_\I(X,Y)$. We lose no generality assuming that $O_{fg}$ is of subbasic form $O_{fg}=[K;U]$ for some compact set $K\in\I$ and some open set $U\subset Y$. Consider the map $(f,g):X\to Y\times Y$, $(f,g):x\mapsto (f(x),g(x))$, and observe that $fg=p\circ (f,g)$. The inclusion $fg(K)\subset U$ implies that the compact set $\{(f(x),g(x)):x\in K\}\subset Y\times Y$ is contained in the open set $W=\{(y,y')\in Y\times Y:p(y,y')\in U\}$. For every point $x\in K$ find open subsets $O_{f(x)}\ni f(x)$ and $O_{g(x)}\ni g(x)$ of $Y$ such that $O_{f(x)}\times O_{g(x)}\subset W$. The continuity of the functions $f,g$ yields an open neighborhood $O_x\subset X$ of $x$ such that $f(O_x)\subset O_{f(x)}$ and $g(O_x)\subset O_{g(x)}$. By the compactness of $K$ the open cover $\{O_x:x\in K\}$ of $K$ contains a finite subcover $\{O_{x_1},\dots,O_{x_n}\}$. Using the  regularity of the compact Hausdorff space $K\in\I$, we can find a cover $\{K_1,\dots,K_n\}$ of $K$ by compact subsets $K_i\subset O_{x_i}$, $1\le i\le n$. Then $O_f=\bigcap_{i=1}^n[K_i,O_{f(x_i)}]$ and $O_g=\bigcap_{i=1}[K_i,O_{g(x_i)}]$ are open neighborhoods of the functions $f$ and $g$ in the function space $C_\I(X,Y)$ such that $P(O_f\times O_g)\subset O_{fg}$. This completes the proof of the continuity of the binary operation $P$ at $(f,g)$.
\end{proof}

Proposition~\ref{p:magmaC} means that for any topological magma $Y$ the function space $C_\I(X,Y)$ has the natural structure of a topological magma.

\begin{proposition}\label{p4.3} Let $X$ be a topological space, $\I$ be an ideal of compact Hausdorff subspaces of $X$ and $Y$ be a topological magma. If $Y$ is a topological lop (topological loop, topological group), then so is the topological magma $C_\I(X,Y)$.
\end{proposition}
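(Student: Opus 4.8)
The plan is to build the lop (loop, group) structure on $C_\I(X,Y)$ directly from the induced operation $P$ supplied by Proposition~\ref{p:magmaC}, which already guarantees that $C_\I(X,Y)$ is a topological magma. First I would identify its unit: if $e\in Y$ is the unit of $Y$, then the constant function $\hat e\colon X\to\{e\}$ is continuous and satisfies $P(f,\hat e)(x)=p(f(x),e)=f(x)=p(e,f(x))=P(\hat e,f)(x)$ for every $f\in C(X,Y)$ and $x\in X$, so $\hat e$ is a two-sided unit of the magma $C_\I(X,Y)$.

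The heart of the argument is the lop case, i.e.\ showing that $H\colon C_\I(X,Y)^2\to C_\I(X,Y)^2$, $H(f,g)=(f,P(f,g))$, is a homeomorphism. Since $Y$ is a topological lop, the map $h\colon Y^2\to Y^2$, $h(y,z)=(y,yz)$, is a homeomorphism. Because $h$ fixes the first coordinate, so does $h^{-1}$, which therefore has the form $h^{-1}(y,w)=(y,q(y,w))$ for a \emph{continuous} binary operation $q\colon Y^2\to Y$ characterized by $y\cdot q(y,w)=w$ and $q(y,yz)=z$. Now I would apply Proposition~\ref{p:magmaC} a second time, to the continuous operation $q$ in place of $p$: this yields a continuous induced operation $Q\colon C_\I(X,Y)^2\to C_\I(X,Y)$, $Q(f,k)(x)=q(f(x),k(x))$. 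A pointwise computation using the two identities for $q$ then shows that $(f,k)\mapsto(f,Q(f,k))$ is a two-sided inverse of $H$: indeed $P(f,Q(f,k))(x)=f(x)\cdot q(f(x),k(x))=k(x)$ and $Q(f,P(f,g))(x)=q(f(x),f(x)g(x))=g(x)$. As both $H$ and this inverse are continuous (by the two applications of Proposition~\ref{p:magmaC}), $H$ is a homeomorphism and $C_\I(X,Y)$ is a topological lop.

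The loop case is handled by running the same argument a second time on the right-multiplication homeomorphism $r\colon Y^2\to Y^2$, $r(y,z)=(y,zy)$: its inverse again fixes the first coordinate, giving a continuous right division to which Proposition~\ref{p:magmaC} applies, and so $(f,g)\mapsto(f,P(g,f))$ becomes a homeomorphism of $C_\I(X,Y)^2$ as well. For the group case it suffices, by the characterization that a topological lop is a topological group precisely when its operation is associative (see \cite[1.1.2]{Rob}), to observe that associativity of $p$ is inherited pointwise by $P$, since $P(P(f,g),h)(x)=p(p(f(x),g(x)),h(x))=p(f(x),p(g(x),h(x)))=P(f,P(g,h))(x)$.

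The step I expect to be the crux is recognizing that the inverse of the defining homeomorphism $h$ of $Y$ again preserves the first coordinate and hence determines a genuine continuous binary operation $q$ on $Y$; once this is in place, the continuity of $H^{-1}$ is not a separate estimate but simply another instance of Proposition~\ref{p:magmaC}, so no new analysis on the function space is required.
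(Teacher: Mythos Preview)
Your proof is correct and follows essentially the same route as the paper: identify the constant function as unit, extract the continuous left division $q$ from the inverse of the lop homeomorphism $h$, apply Proposition~\ref{p:magmaC} to $q$ to get a continuous $Q$, and check pointwise that $(f,k)\mapsto(f,Q(f,k))$ inverts $H$. The paper dismisses the loop and group cases with ``by analogy'', whereas you spell out the right-division argument for loops and invoke the associativity characterization for groups; both variants are fine and amount to the same thing.
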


\begin{proof} Let $p:Y\times Y\to Y$ stands for the binary operation of the topological magma $Y$.
If $e\in Y$ is a unit of $Y$, then the definition of the induced binary operation $P$ on $C_\I(X,Y)$ guarantees that the constant function $\bar e:X\to\{e\}\subset X$ is the unit of the topological magma $C_\I(X,Y)$.

If $Y$ is a topological lop, then the map $h:Y\times Y\to Y\times Y$, $h:(x,y)\mapsto (x,p(x,y))$ is a homeomorphism. Consequently, for the inverse homeomorphism $h^{-1}:Y\times Y\to Y\times Y$ there is a continuous binary operation $q:Y\times Y\to Y$ such that $h^{-1}(x,y)=(x,q(x,y))$ for all $x,y\in X$. The equalities $h\circ h^{-1}=\id=h^{-1}\circ h$ imply that $p(x,q(x,y))=y=q(x,p(x,y))$ for any $x,y\in Y$. Let $Q$ be the continuous binary operation on the function space $C_\I(X,Y)$ induced by the operation $q$. The equality $p(x,q(x,y))=y=q(x,p(x,y))$ for $x,y\in Y$ implies the equality $P(f,Q(f,g))=g=Q(f,P(f,g))$ holding for any functions $f,g\in C_\I(X,Y)$. Then the map $$H:C_\I(X,Y)^2\to C_\I(X,Y)^2,\;\;H:(f,g)\mapsto (f,P(f,g))$$is a homeomorphism with inverse $H^{-1}(f,g)=(f,Q(f,g))$. This means that the topological magma $C_\I(X,Y)$ is a topological lop with unit $\bar e$.

By analogy we can prove that the topological magma $C_\I(X,Y)$ is a topological loop (group) if so is the topological magma $Y$.
\end{proof}

Combining Propositions~ \ref{p:rectilop} and \ref{p4.3}, we obtain the following corollary, which give us many examples of topologically homogeneous function spaces.

\begin{corollary}\label{c:rectiF} Let $X$ be a topological space and $\I$ be an ideal of compact Hausdorff subspaces of $X$. For any rectifiable space $Y$ the function space $C_\I(X,Y)$ is rectifiable and hence is continuously homogeneous.
\end{corollary}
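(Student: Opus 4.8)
The statement to prove is: for any topological space $X$, any ideal $\I$ of compact Hausdorff subspaces of $X$, and any rectifiable space $Y$, the function space $C_\I(X,Y)$ is rectifiable (and hence continuously homogeneous). The plan is to reduce rectifiability of $C_\I(X,Y)$ to rectifiability of $Y$ by passing through the algebraic characterization already established in the excerpt.

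First I would invoke Proposition~\ref{p:rectilop}: since $Y$ is rectifiable, $Y$ is homeomorphic to a topological lop. So without loss of generality we may equip $Y$ with a continuous binary operation $p:Y\times Y\to Y$ making $(Y,p)$ a topological lop, i.e.\ $Y$ has a unit $e$ and the map $(x,y)\mapsto(x,p(x,y))$ is a homeomorphism of $Y\times Y$. This is the crucial transfer: rectifiability is a purely topological condition, but to move it onto the function space I want the algebraic structure, because the induced operation on $C_\I(X,Y)$ is what is genuinely available.

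Next I would apply Proposition~\ref{p4.3}, which states precisely that if $Y$ is a topological lop then so is $C_\I(X,Y)$ (with the pointwise-induced operation $P$, whose continuity in the $\I$-open topology is guaranteed by Proposition~\ref{p:magmaC}, the hypothesis that $\I$ consists of compact Hausdorff subspaces being exactly what that proposition requires). Thus $C_\I(X,Y)$ carries the structure of a topological lop. Finally I would close the loop by applying Proposition~\ref{p:rectilop} again, now in the reverse direction: a topological space homeomorphic to a topological lop is rectifiable, so $C_\I(X,Y)$ is rectifiable, and by the equivalence of rectifiability with continuous homogeneity (again Proposition~\ref{p:rectilop}), it is continuously homogeneous.

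The argument is therefore a three-line chain $\text{rectifiable}\Rightarrow\text{lop}\Rightarrow\text{lop}\Rightarrow\text{rectifiable}$, and all the real work has already been done upstream. The only point deserving care is the hypothesis on $\I$: both Propositions~\ref{p:magmaC} and \ref{p4.3} demand that $\I$ be an ideal of compact \emph{Hausdorff} subspaces, so I would make sure the corollary inherits that same hypothesis rather than the weaker ``discretely-complete ideal'' used in Section~\ref{s2}. I do not expect a genuine obstacle here, since the heavy lifting—continuity of the induced inverse operation $Q$ and the verification that $H:(f,g)\mapsto(f,P(f,g))$ is a homeomorphism with inverse $(f,g)\mapsto(f,Q(f,g))$—is exactly the content of Proposition~\ref{p4.3}; the corollary merely records the consequence of combining it with Uspenski\u\i's equivalence.
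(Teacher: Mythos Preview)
Your proposal is correct and follows exactly the paper's approach: the paper states that the corollary is obtained by combining Propositions~\ref{p:rectilop} and \ref{p4.3}, which is precisely the chain rectifiable $\Rightarrow$ topological lop $\Rightarrow$ $C_\I(X,Y)$ is a topological lop $\Rightarrow$ rectifiable that you spell out.
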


Combining Corollaries~\ref{c:homoPyt} and \ref{c:rectiF} we obtain the following result generalizing Corollary~\ref{c:groupPyt}.

\begin{corollary}\label{c:contihomF} Let $X$ be a Hausdorff topological space and $\I$ be a  discretely-complete ideal of compact subsets of $X$. For any continuously homogeneous space $Y$ with the strong Pytkeev property, the function space $C_\I(X,Y)$ has the strong Pytkeev property too.
\end{corollary}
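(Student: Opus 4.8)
The plan is to obtain this corollary as a short chain of two earlier results rather than as a fresh construction: Corollary~\ref{c:rectiF} will manufacture the homogeneity of the function space, and Corollary~\ref{c:homoPyt} will then convert that homogeneity together with the pointwise strong Pytkeev property of $Y$ into the global strong Pytkeev property of $C_\I(X,Y)$. First I would promote the continuous homogeneity of $Y$ to rectifiability: since $Y$ is continuously homogeneous, Proposition~\ref{p:rectilop} gives that $Y$ is rectifiable. As $X$ is Hausdorff, every member of the discretely-complete ideal $\I$ is a compact Hausdorff subspace of $X$, so Corollary~\ref{c:rectiF} applies verbatim and yields that $C_\I(X,Y)$ is itself rectifiable. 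Invoking Proposition~\ref{p:rectilop} once more, a rectifiable space is continuously homogeneous, and in particular topologically homogeneous; this produces exactly the homogeneity hypothesis that Corollary~\ref{c:homoPyt} demands of the function space.

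Next I would feed this homogeneity back into the local statement. Because $Y$ has the strong Pytkeev property, it has the strong Pytkeev property at every point, hence at some point $e\in Y$. Thus we are in the situation of Corollary~\ref{c:homoPyt}: the function space $C_\I(X,Y)$ is topologically homogeneous and $Y$ has the strong Pytkeev property at $e$. Applying that corollary immediately delivers the conclusion that $C_\I(X,Y)$ has the strong Pytkeev property, completing the argument. No new estimates are required beyond recognizing that the rectifiability output of Corollary~\ref{c:rectiF} is precisely the homogeneity input of Corollary~\ref{c:homoPyt}.

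The nearest thing to an obstacle is the bookkeeping of the standing hypotheses on $X$ and $\I$ across the two corollaries, and I would devote a sentence to it. Corollary~\ref{c:homoPyt} is framed for an $\aleph_0$-space $X$ (equivalently, through Theorem~\ref{main}, a Hausdorff space carrying a countable $k$-network) equipped with a discretely-complete ideal $\I$, whereas Corollary~\ref{c:rectiF} asks only that $\I$ consist of compact Hausdorff subspaces. Under the present data this compatibility condition is automatic, since $X$ is Hausdorff makes each compact member of $\I$ Hausdorff, so both corollaries may legitimately be invoked on the same $X$ and $\I$. Once this hypothesis-matching is checked, the two results compose and the proof is finished.
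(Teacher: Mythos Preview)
Your proposal is correct and follows exactly the route the paper indicates: the paper derives this corollary in a single sentence by ``combining Corollaries~\ref{c:homoPyt} and \ref{c:rectiF}'', and your write-up simply unpacks that combination (with Proposition~\ref{p:rectilop} as the bridge between continuous homogeneity and rectifiability). One small remark on your bookkeeping paragraph: the compatibility you verify (Hausdorff $X$ $\Rightarrow$ every $K\in\I$ is compact Hausdorff) handles Corollary~\ref{c:rectiF}, but Corollary~\ref{c:homoPyt} also needs $X$ to be an $\aleph_0$-space (or at least have a countable $k$-network), a hypothesis that the statement of Corollary~\ref{c:contihomF} as printed appears to omit; this is a gap in the paper's formulation rather than in your argument.
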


Finally we present an example of a topologically homogeneous (first countable) cosmic space $Y$ for which the function space $C_k(2^\w,Y)$ does dot have the strong Pytkeev property.
Here $2^\w$ denotes the Cantor cube $\{0,1\}^\w$. This example shows that Corollary~\ref{c:contihomF} cannot be generalized to topologically homogeneous spaces.

Our counterexample has the structure of a quasitopological group. By a {\em quasitopological group} we understand a group $G$ endowed with a topology $\tau$ in which the inversion $G\to G$, $g\mapsto g^{-1}$, is continuous and the group operation $G\times G\to G$ is separately continuous (which is equivalent to the continuity of left and right shifts). It is clear that quasitopological groups are topologically homogeneous spaces.

We recall that a topological space $X$ is {\em cosmic} if it is regular and has countable network.
It is known \cite[4.8]{Gru} that a regular space is cosmic if and only if it is a continuous image of a separable metrizable space.

\begin{example} There exists a cosmic quasitopological group $Y$ which is first countable \textup{(}and thus has the strong Pytkeev property\textup{)} but the function space $C_k(2^\w,Y)$ has uncountable $\cs^*$-character \textup{(}and hence fails to have the strong Pytkeev property\textup{)}.
\end{example}

\begin{proof} We shall use the quasitopological group constructed in Example 4.2 of \cite{Ban}.
Let $\IQ$ be the additive group of rational numbers. Endow the group $Y=\IR\times\IQ$ with the shift-invariant topology $\tau$ whose neighborhood base at zero $(0,0)$ consists of the sets
$$\maltese_\e=\{(0,0)\}\cup\{(x,y)\in\IR\times\IQ:|xy|<\e(x^2+y^2)<\e^2\}$$
where $\e>0$. It is easy to see that this topology is regular and the family $\{(a,b)\times\{q\}\colon a,b,q\in\IQ,\;a<b\}$ is a countable network for $Y$. Since the topology $\tau$ is first countable and invariant under the inversion, the group $Y=\IR\times\IQ$ endowed with the topology $\tau$ is a first countable cosmic quasitopological group.

Let $X\subset [0,1]$ be the standard Cantor set in the real line. Since the function spaces $C_k(2^\w,Y)$ and $C_k(X,Y)$ are homeomorphic, it suffices to prove that the space $C_k(X,Y)$ fails to have countable $\cs^*$-network at the identity embedding $f:X\to \IR\times\{0\}\subset Y$, $f:x\mapsto (x,0)$. Assume conversely that $C_k(X,Y)$ has a countable $\cs^*$-network $\mathcal N$ at $f$. We lose no generality assuming that $\mathcal N$ is closed under finite unions and hence is a $\cs$-network at $f$ (see \cite{BZd}).

Let $2^{<\w}=\bigcup_{n\in\w}2^n$ be the set of all finite binary sequences. For a binary sequence $s=(s_0,\dots,s_{n-1})\in 2^{<\w}$ and a number $i\in\{0,1\}$ let $|s|=n$ be the length of $s$ and  $s\hat{\,}i=(s_0,\dots,s_{n-1},i)$ be the {\em concatenation} of $s$ and $i$.

Looking at the standard construction of the Cantor set $X\subset\IR$, we can choose a family $(U_s)_{s\in s^{<\w}}$ of closed-and-open subsets of $X$ such that $U_\emptyset=X$ and for every $s\in 2^{<\w}$ the following conditions are satisfied:
\begin{itemize}
\item[(a)] $U_{s\hat{\,}0}\cup U_{s\hat{\,}1}=U_s$ and $U_{s\hat{\,}0}\cap U_{s\hat{\,}1}=\emptyset$;
\item[(b)] $|x-y|\le 3^{-|s|}$ for any points $x,y\in U_s$;
\item[(c)] $|x-y|\ge 3^{-|s|-1}$ for any points $x\in U_{s\hat{\,}0}$ and $y\in U_{s\hat{\,}1}$.
\end{itemize}

For every $m\in\w$ choose any rational point $y_m\in (2^{-m-1},2^{-m})$ and observe that for any $x\in X\subset\IR$ the sequence $\big\{(x,y_m)\big\}_{m\in\w}$ converges to $(x,0)$ in the space $Y$.

For every point $x\in X$ and number $m\in\w$ let $f_{x,m}:X\to Y$ be the continuous function defined by
$$f_{x,m}(z)=\begin{cases}
(z,y_m)&\mbox{if $x\in U_s$ for some $s\in 2^m\subset 2^{<\w}$},\\
(z,0)&\mbox{otherwise}.
\end{cases}
$$

Taking into account that the sequence $\big\{(3^{-m},2^{-m-1})\}_{m\in\w}$ converges to $(0,0)$ in $Y$, we can see that for every $x\in X$ the function sequence $(f_{x,m})_{m\in\w}$ converges to $f$ in the function space $C_k(X,Y)$.

Choose a positive $\e$ such that  $\maltese_\e\subset\{(x,y)\in\IR\times\IQ: |y|\ge 2|x| \mbox{ or $|x|\ge 2|y|$}\}$ and observe that $\e<\frac12$.
For every $x\in X$ consider the neighborhood $O_x(f)=\{g\in C_k(X,Y):g(x)\in f(x)+\maltese_\e\}$ of the function $f$ in the function space $C_k(X,Y)$.

Since the family $\mathcal N$ is a $\cs$-network at $f=\lim_{m\to\infty}f_{x,m}$,
for every $x\in 2^\w$ there are a set $N_x\in\mathcal N$ and a number $m_x\in\w$ such that $\{f_{x,m}\}_{m\ge m_x}\subset N_x\subset O_x(f)$. Since the set $\mathcal N\times \w$ is countable, by the Pigeonhole Principle,
for some $(N,m)\in\mathcal N\times \w$ the subset $X'=\{x\in X: N_x=N,\;m_x=m\}\subset \IR$ is uncountable and hence contains two points $x,x'\in X$ with $|x-x'|<\min\{\e,3^{-m-1}\}$. Find a unique $k\in\w$ such that $2^{-k-1}<|x-x'|\le 2^{-k}$ and observe that $k\ge m$ (as $2^{-k-1}<|x-x'|<3^{-m-1}$). Then $f_{x',m},f_{x,m}\in P_{x'}=P_x\subset O_{x}(f)$. The condition (c) and the inequality $|x-x'|<3^{-m-1}$ imply  that $x',x\in U_s$ for some $s\in 2^m$. Then $(x',y_m)=f_{x',m}(x)\in f(x)+\maltese_\e=(x,0)+\maltese_\w$ yields $(x'-x,y_m)\in\maltese_\e$, which is not possible as
$$\frac12=\frac{2^{-m-1}}{2^{-m}}<\frac{|y|}{|x'-x|}<\frac{2^{-m}}{2^{-m-1}}=2$$contradicting the choice of $\e>0$.
\end{proof}

\section{The structure and metrizability of topological lops with the strong Pytkeev property}\label{s5}

In this section we establish some structure properties of topological groups (and lops) with the strong Pytkeev property. In fact all the arguments work to only for topological groups but for the more general class of topological lops (or rectifiable spaces). From now on all rectifiable spaces (and topological lops) are assumed to be regular (equivalently, $T_0$-spaces).
 Combining Proposition~\ref{first:sak} with a result of A.Gulko (saying that each first countable rectifiable space is metrizable) we get the following metrizability criterion.

\begin{proposition} A rectifiable space is metrizable $X$ if and only if $X$ has both the strong Pytkeev property and countable fan open-tightness.
\end{proposition}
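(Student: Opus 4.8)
The plan is to derive this metrizability criterion directly from the two results flagged immediately before the statement. The target equivalence reads: a (regular) rectifiable space $X$ is metrizable if and only if $X$ has both the strong Pytkeev property and countable fan open-tightness. The two ingredients I would use are Proposition~\ref{first:sak} (Sakai's version: a regular space is first countable at $x$ iff it has the strong Pytkeev property and countable fan open-tightness at $x$) and Gulko's theorem (every first countable rectifiable space is metrizable). Since rectifiable spaces are assumed regular from this point onward, both ingredients are applicable.

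For the ``only if'' direction, suppose $X$ is metrizable. Then $X$ is first countable, so by the easy implications recorded in the opening diagram of Section~\ref{s1} (first countable $\Rightarrow$ strong Pytkeev) it has the strong Pytkeev property at every point. Metrizable spaces are first countable, hence trivially have countable fan open-tightness (indeed countable fan tightness) at each point, as the ``only if'' part of Proposition~\ref{first:sak} asserts. So both properties hold, establishing this direction with essentially no work.

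For the ``if'' direction, assume $X$ is rectifiable, regular, and has both the strong Pytkeev property and countable fan open-tightness. Fix the unit $e$ (available since, by Proposition~\ref{p:rectilop}, $X$ is homeomorphic to a topological lop). By Proposition~\ref{first:sak} applied at the point $e$, the space $X$ is first countable at $e$. The key point to exploit is homogeneity: a rectifiable space is topologically homogeneous, since for any point $a$ the left shift $h_a:x\mapsto ax$ is a homeomorphism with $h_a(e)=a$. First countability is a local property preserved by homeomorphisms, so first countability at $e$ transfers to first countability at every point $a=h_a(e)$. Hence $X$ is first countable. Gulko's theorem then yields that the first countable rectifiable space $X$ is metrizable.

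I expect no genuine obstacle here: the statement is essentially a direct splicing together of Proposition~\ref{first:sak} (to pass from the two local properties to first countability at one point) with homogeneity (to spread first countability to all points) and Gulko's theorem (to upgrade first countability to metrizability). The only point requiring a little care is that Proposition~\ref{first:sak} is a pointwise statement, so I must verify the hypotheses at a single point and then invoke homogeneity rather than trying to check the hypotheses uniformly; but since both the strong Pytkeev property and countable fan open-tightness are assumed at every point (in particular at $e$), this is immediate. Thus the proof is short, and its substance lies entirely in the two cited external results rather than in any new argument.
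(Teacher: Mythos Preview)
Your proof is correct and follows precisely the route the paper indicates: it states the proposition as an immediate combination of Proposition~\ref{first:sak} (Sakai) with Gul'ko's theorem that first countable rectifiable spaces are metrizable, and your argument spells out exactly this. The detour through homogeneity is harmless but unnecessary, since the hypotheses hold at every point and Proposition~\ref{first:sak} already yields first countability everywhere.
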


Next, we reveal the structure of sequential topological groups (more generally, sequential topological lops) with the strong Pytkeev property proving that they contain an open $sk_\w$-subgroup (resp. open $sk_\w$-sublop).

A topological space $X$ is called a {\em $k_\w$-space} (resp. {\em $sk_\w$-space}) if there is a countable family $\K$ of compact (resp. compact metrizable) subspaces of $X$ such that a subset $F\subset X$ is closed in $X$ if and only if for every $K\in\K$ the intersection $K\cap F$ is closed in $K$. A topological group will be called a {\em $k_\w$-group} (resp. a {\em $sk_\w$-group}) is its underlying topological space is a $k_\w$-space (resp. $sk_\w$-space).
Observe that a $k_\w$-space $X$ is an $sk_\w$-space if and only if it is {\em submetrizable} (which means that $X$ admits a continuous metric). It is easy to see that each $sk_\w$-space, being a sequential $\aleph_0$-space, is a $\Pyt_0$-space and hence has the strong Pytkeev property.

In \cite{BZd} Banakh and Zdomskyy proved that each sequential topological groups with countable $\cs^*$-character is metrizable or contains an open $sk_\w$-subgroup. In \cite{BR} this result was generalized to topological lops and rectifiable spaces.

\begin{theorem}\label{t:normrec} For a sequential rectifiable space $X$ the following conditions are equivalent:
\begin{enumerate}
\item $X$ has the strong Pytkeev property;
\item $X$ has countable $\cs^*$-character;
\item $X$ is metrizable or contains a clopen rectifiable $sk_\w$-subspace.
\end{enumerate}
\end{theorem}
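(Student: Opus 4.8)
The plan is to prove the cycle of implications $(1)\Rightarrow(2)\Rightarrow(3)\Rightarrow(1)$. The implication $(3)\Rightarrow(1)$ is essentially free: if $X$ is metrizable it is first countable, hence has the strong Pytkeev property; if $X$ contains a clopen $sk_\w$-subspace $H$, then since every $sk_\w$-space is a $\Pyt_0$-space and thus has the strong Pytkeev property, and since the left shifts $h_a\colon X\to X$ of the topological lop are homeomorphisms carrying the clopen neighborhood structure around, one transports a Pytkeev network from $H$ to every point of $X$ using the homogeneity supplied by Proposition~\ref{p:rectilop}. The implication $(1)\Rightarrow(2)$ is immediate from the diagram of Section~\ref{s1}, since the strong Pytkeev property implies countable $k$-character, which in turn implies countable $\cs^*$-character.

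The real content is $(2)\Rightarrow(3)$. Here I would lean directly on the cited generalization in \cite{BR} of the Banakh--Zdomskyy dichotomy: a sequential topological lop (equivalently, sequential rectifiable space) with countable $\cs^*$-character is either metrizable or contains an open $sk_\w$-sublop. Since $X$ is regular (hence $T_0$, by the standing assumption of this section) and rectifiable, Proposition~\ref{p:rectilop} lets me realize $X$ as a topological lop with unit $e$, and the open sublop produced by the dichotomy is automatically clopen: in a topological lop every open sublop is closed, because its complement is a union of translates (cosets under the left-shift homeomorphisms), each of which is open. Thus the open $sk_\w$-sublop is a clopen rectifiable $sk_\w$-subspace, giving exactly condition~(3).

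The main obstacle is verifying that the structural dichotomy really applies verbatim to the rectifiable/lop setting rather than only to topological groups, and that the subspace it yields inherits both rectifiability and the $sk_\w$-property as a genuine subspace. For rectifiability one uses that an open sublop of a topological lop is again a topological lop (the restriction of the multiplication and of the dividing operation $q$ from Proposition~\ref{p4.3} to the sublop are continuous, and the defining homeomorphism $h\colon(x,y)\mapsto(x,xy)$ restricts to the sublop because it is closed under multiplication and division), so by Proposition~\ref{p:rectilop} the sublop is rectifiable. For the $sk_\w$-property I would check that the witnessing countable family of compact metrizable sets lies inside the open sublop and still detects closedness there, which holds because the sublop is clopen and $sk_\w$-ness is inherited by clopen subspaces. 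The delicate point to state carefully is the passage from "countable $\cs^*$-character" as a property at a single point to the global network hypotheses used in \cite{BR}; by homogeneity of the rectifiable space, countable $\cs^*$-character at the unit $e$ propagates to every point via the shift homeomorphisms, so the local hypothesis suffices to invoke the global structure theorem.
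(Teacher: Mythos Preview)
Your approach is essentially identical to the paper's: the paper's proof is one sentence long, declaring $(3)\Rightarrow(1)\Rightarrow(2)$ trivial and deferring $(2)\Rightarrow(3)$ entirely to \cite{BR}, and you do the same, only with more elaboration on why the ``trivial'' implications hold.

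One caution on your added detail: your argument that an open sublop is automatically closed because ``its complement is a union of translates (cosets under the left-shift homeomorphisms), each of which is open'' tacitly assumes that left cosets $aH$ partition $X$. That is a group-theoretic fact relying on right division ($ah=h'$ implies $a=h'h^{-1}\in H$), which is not available in a general lop --- a lop guarantees only that $(x,y)\mapsto(x,xy)$ is a homeomorphism, i.e.\ left division, not right. So this particular justification is suspect in the non-associative setting. It does not damage the overall argument, since the clopen-ness of the $sk_\omega$-subspace is part of the conclusion of the structure theorem in \cite{BR} that you are already citing; you should simply invoke \cite{BR} for the full statement of condition~(3) rather than re-derive closedness by a coset argument.
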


\begin{proof} The implication $(3)\Ra(1)\Ra(2)$ are trivial while $(2)\Ra(3)$ is not trivial but is proved in \cite{BR}.
\end{proof}

\begin{corollary}\label{t:normrec} For a sequential Lindel\"of rectifiable space $X$ the following conditions are equivalent:
\begin{enumerate}
\item $X$ has the strong Pytkeev property;
\item $X$ has countable $\cs^*$-character;
\item $X$  either is metrizable or is an $sk_\w$-space.
\end{enumerate}
\end{corollary}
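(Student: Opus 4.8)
The plan is to reduce everything to the preceding theorem and then upgrade its conclusion using the Lindel\"of hypothesis. That theorem already gives the equivalence of (1), (2), and the condition $(3')$: ``$X$ is metrizable or contains a clopen rectifiable $sk_\w$-subspace.'' Since conditions (1) and (2) of the corollary are verbatim those of the theorem, the only thing left to establish is that, for a \emph{Lindel\"of} sequential rectifiable space, $(3')$ is equivalent to the stated condition (3). The implication $(3)\Ra(3')$ is immediate: a metrizable space satisfies $(3')$ outright, and an $sk_\w$-space is a clopen $sk_\w$-subspace of itself. So the whole content lies in the converse $(3')\Ra(3)$ under the Lindel\"of assumption.

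For $(3')\Ra(3)$ I would argue as follows. Assume $X$ is not metrizable, so it contains a clopen rectifiable $sk_\w$-subspace $H_0$. By Proposition~\ref{p:rectilop} I may assume $X$ is a topological lop with unit $e$, all of whose left shifts $h_a:x\mapsto ax$ are homeomorphisms with $h_a(e)=a$. Picking a point $b\in H_0$ and applying $h_b^{-1}$, I obtain a clopen $sk_\w$-subspace $H:=h_b^{-1}(H_0)\ni e$. Then $\{h_a(H):a\in X\}$ is an open cover of $X$ by clopen copies of $H$, each $h_a(H)$ containing $a$ and hence being an $sk_\w$-space. The Lindel\"of property yields a countable subcover $\{h_{a_n}(H):n\in\w\}$, and disjointifying by $V_n:=h_{a_n}(H)\setminus\bigcup_{i<n}h_{a_i}(H)$ presents $X$ as a countable disjoint union of clopen sets, each $V_n$ being a clopen subspace of the $sk_\w$-space $h_{a_n}(H)$.

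It then remains to assemble an $sk_\w$-structure on $X$ from the pieces. First I would record the auxiliary fact that a clopen subspace $V$ of an $sk_\w$-space with witnessing family $\{K_m\}_{m\in\w}$ is again $sk_\w$, witnessed by $\{V\cap K_m\}_{m\in\w}$; this is a short check using that $V$ is simultaneously open and closed, so that for $F\subset V$ one has $F\cap K_m=F\cap(V\cap K_m)$ and closedness in $K_m$ coincides with closedness in $V\cap K_m$. Applying this to each $V_n$ and taking the union over all $n\in\w$ of the resulting countable families gives a countable family $\mathcal K$ of compact metrizable subsets of $X$. The final step is to verify that $\mathcal K$ witnesses that $X$ is a $k_\w$-space, which reduces to showing that a set $F\subset X$ is closed if and only if $F\cap V_n$ is closed in $V_n$ for every $n\in\w$.

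The only non-formal point --- and the step I expect to be the main obstacle --- is exactly this last equivalence, since a countable union of closed sets need not be closed. The resolution is that $\{V_n\}$ is a \emph{clopen} partition: if $x\in\overline{F}$, then $x$ lies in a unique $V_n$, and because $V_n$ is an open neighborhood of $x$ one gets $x\in\overline{F\cap V_n}$; as $V_n$ is also closed, $x\in F\cap V_n\subset F$. Thus membership in the closure is detected locally inside a single clopen piece, which rescues the argument and shows that $X$ is an $sk_\w$-space. Everything else is routine bookkeeping with the witnessing families, and combining this with the preceding theorem closes the chain of equivalences.
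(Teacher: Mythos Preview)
Your proposal is correct and follows exactly the route the paper intends: the corollary is stated without proof as an immediate consequence of the preceding theorem, so the only content is precisely the step $(3')\Rightarrow(3)$ under the Lindel\"of hypothesis, which you carry out in full detail via the clopen cover by left translates, a countable subcover, disjointification, and assembly of the $sk_\omega$-structure from the clopen pieces. Your argument is sound at every step (including the verification that clopen subspaces inherit the $sk_\omega$-property and that closedness is detected piecewise on a clopen partition), and is in fact more explicit than what the paper supplies.
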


Our next aim is to show that for locally narrow topological lops the strong Pytkeev property is equivalent to metrizability.

\begin{definition} A subset $B$ of a topological lop $X$ is called
\begin{itemize}
\item {\em narrow} if for any neighborhood $U\subset X$ of the unit $e$, any infinite subset $A\subset B$ contains a point $a\in A$ such that $A\cap aU$ is infinite;
\item {\em bounded} if for any neighborhood $U\subset X$ of the unit $e$ there is a finite set $F\subset X$ such that $B\subset FU$.
\end{itemize}
\end{definition}

It is easy to see that each narrow subset of a topological lop is bounded. The converse is true in topological groups.

\begin{definition} A topological lop $X$ is called
\begin{itemize}
\item {\em locally narrow} if $X$ contains a narrow neighborhood of the unit;
\item {\em locally bounded} if $X$ contains a bounded neighbrhood of the unit;
\item ({\em locally}) {\em precompact} if $X$ is topologically isomorphic to a sublop of a (locally) compact topological lop;
\vskip2pt

\item {\em pseudocompact} if the topological space of $X$ is Tychonoff and each continuous real-valued function on $X$ is bounded;
\item {\em countably-compact} if each infinite subset $A\subset X$ has an accumulation point in $X$;
\item {\em locally countably-compact} if each point $x\in X$ has a countably-compact neighborhood in $X$.
\end{itemize}
\end{definition}
Observe that the three latter properties are topological and do not depend on the algebraic structure of the topological lop.

For any topological lop these properties relate as follows:
$$
\xymatrix{
\mbox{pseudocompact}\ar@{=>}[d]&\mbox{countably-compact}\ar@{=>}[d]\\
\mbox{precompact}\ar@{=>}[d]\ar@{=>}[r]&\mbox{narrow}\ar@{=>}[d]\ar@{=>}[r]&\mbox{bounded}\ar@{=>}[d]\\
\mbox{locally precompact}\ar@{=>}[r]&\mbox{locally narrow}\ar@{=>}[r]&\mbox{locally bounded}\\
&\mbox{locally countably-compact}\ar@{=>}[u].
}
$$
Non-trivial implications from this diagram are proved in \cite[\S4]{Ban}. It is easy to see that a sublop of a locally narrow topological lop is locally narrow. We do not know if the similar property holds for locally bounded topological lops. But for topological groups we have the following equivalence (see \cite[3.7.I]{ArT}).

\begin{proposition} For any topological group $G$ the following conditions are equivalent:
\begin{enumerate}
\item $G$ is (locally) precompact;
\item $G$ is (locally) narrow;
\item $G$ is (locally) bounded.
\end{enumerate}
\end{proposition}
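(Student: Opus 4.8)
The plan is to exploit the fact that the forward implications $(1)\Ra(2)\Ra(3)$, in both their global and local forms, are already recorded in the diagram established in \cite[\S4]{Ban} and hold for arbitrary topological lops. Consequently, to obtain the stated equivalence for a topological group $G$ it suffices to close the cycle by proving the single implication $(3)\Ra(1)$, namely that a (locally) bounded topological group is (locally) precompact.

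First I would reconcile the two notions of smallness. For a subset $B$ of a topological group $G$, the boundedness condition --- for every neighborhood $U$ of $e$ there is a finite $F\subset G$ with $B\subset FU$ --- is precisely Weil's notion of \emph{total boundedness} of $B$. The only cosmetic discrepancy is that the covering centers lie in $G$ rather than in $B$, and this is removed by a routine trick: given $U$, choose a symmetric neighborhood $V$ with $VV\subset U$, cover $B\subset FV$ with finite $F\subset G$, discard the translates $fV$ that miss $B$, and for each surviving $f$ pick $b_f\in fV\cap B$; then $f\in b_fV$ and hence $fV\subset b_fVV\subset b_fU$, so $B\subset\{b_f\}U$ with centers now in $B$. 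Thus ``bounded'' and ``totally bounded'' coincide for subsets of a topological group.

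Next I would pass to the Ra\u\i kov completion $\hat G$ of $G$, invoking the classical facts that $G$ sits as a dense subgroup of the complete group $\hat G$, that total boundedness is a uniform property inherited by closures taken in $\hat G$, and that a complete totally bounded subset of $\hat G$ is compact. In the global case $(3)$ says $G$ is bounded, hence totally bounded, so $\hat G=\cl_{\hat G}G$ is complete and totally bounded, thus compact; then $G$ is a subgroup of a compact group and so is precompact. In the local case let $V$ be a bounded (equivalently totally bounded) neighborhood of $e$ and put $K=\cl_{\hat G}V$. As a closed subset of the complete group $\hat G$, $K$ is complete, and it is totally bounded, hence compact. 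Picking an open $W\subset\hat G$ with $e\in W\cap G\subset V$ and using the density of $G$, one gets $W\subset\cl_{\hat G}(W\cap G)\subset K$, so $K$ is a compact neighborhood of $e$ in $\hat G$. Therefore $\hat G$ is locally compact and $G$, as a subgroup of it, is locally precompact.

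The center-replacement step and the neighborhood bookkeeping are purely formal. The genuine content is the implication \emph{complete $+$ totally bounded $\Ra$ compact} applied inside $\hat G$, together with the verification that the compact closure $K$ is actually a neighborhood of $e$ in $\hat G$ (so that $\hat G$ is \emph{locally} compact, not merely the carrier of one compact set); these are exactly the points where completeness of $\hat G$ and density of $G$ are used, and they form the main obstacle. This is the argument underlying \cite[3.7.I]{ArT}.
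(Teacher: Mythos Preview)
Your argument is correct and is essentially the classical route: reduce to $(3)\Ra(1)$, identify ``bounded'' with Weil's total boundedness, pass to the Ra\u\i kov completion, and use that complete plus totally bounded implies compact. The paper, however, does not give its own proof of this proposition at all --- it simply records the equivalence with a reference to \cite[3.7.I]{ArT}. So there is nothing to compare against beyond noting that your sketch is exactly the standard argument underlying that citation.
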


Now we establish the promised metrization criterion for locally narrow topological lops.

\begin{theorem}\label{t:lnlop} A locally narrow topological lop $X$ has the strong Pytkeev property if and only if $X$ is metrizable.
\end{theorem}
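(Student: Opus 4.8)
The plan is to prove the two implications separately, the forward one being immediate and the converse carrying all the content. If $X$ is metrizable then it is first countable, and by the top line of the diagram in Section~\ref{s1} first countability implies the strong Pytkeev property; so the ``if'' part is trivial. For the nontrivial direction I assume that $X$ is a locally narrow topological lop with the strong Pytkeev property, and fix an open narrow neighborhood $W$ of the unit $e$ (a sub-neighborhood of a narrow neighborhood is again narrow, since narrowness is inherited by subsets). Because a rectifiable space is regular, the metrizability criterion for rectifiable spaces proved above (metrizable $\Leftrightarrow$ strong Pytkeev property $+$ countable fan open-tightness) reduces the whole theorem to one statement: $X$ has countable fan open-tightness at $e$. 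Equivalently, by Proposition~\ref{first:sak} together with A.~Gulko's theorem that a first countable rectifiable space is metrizable, it suffices to prove that $X$ is first countable at $e$; and since the left shifts of a lop are homeomorphisms, verifying either property at the single point $e$ suffices.

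To set up the fan, I fix a Pytkeev network $\N=\{N_k\}_{k\in\w}$ at $e$ and, exactly as in Propositions~\ref{first} and \ref{vazlyvo}, assume that $\N$ is closed under finite unions, that $\ddot e=\{e\}$, and that $\N$ is a local $k$-network at $e$. To check countable fan open-tightness I start from open sets $A_n\subset X$ with $e\in\overline{A_n}$; intersecting with the open neighborhood $W$ preserves $e\in\overline{A_n\cap W}$, so I may assume $A_n\subset W$, whence each $A_n$ meets every neighborhood of $e$. The goal is to choose finite sets $F_n\subset A_n$ whose union accumulates at $e$: then every neighborhood of $e$ meets $\bigcup_{n\ge m}F_n$ for each $m$, hence meets infinitely many $F_n$, which is precisely the required fan.

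Narrowness of $W$ enters as a total-boundedness surrogate producing clusters. For any infinite selection $\{a_n:a_n\in A_n\}\subset W$ and any prescribed neighborhood $V$ of $e$, narrowness yields a point $a$ and an infinite subset of the selection contained in the single left-translate $aV$; the left-division homeomorphism $h_a^{-1}\colon z\mapsto a\backslash z$ (the map $q$ of Proposition~\ref{p4.3}) carries that subset into $V$ and sends $a$ to $e$. The plan is then to run a diagonal recursion over an enumeration of $\N$: at each stage I use the openness of the $A_n$ to keep the partial selection approaching $e$, use narrowness to extract an infinite translate-cluster, and transport it to the unit, where the Pytkeev property of $\N$ captures infinitely many points of the transported set inside the current test neighborhood by a single member of $\N$; translating back then shows that infinitely many of the selected points already lie in that neighborhood. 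Interleaving the countably many members of $\N$ as successive tests produces one selection whose fan accumulates at $e$.

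The hard part will be exactly this coordination. Narrowness guarantees clustering only in \emph{some} translate, not at the unit, whereas the strong Pytkeev property speaks only about sets that \emph{already} accumulate at $e$; so the two hypotheses must be used together and in the right order — narrowness to transport an arbitrary cluster to the unit by left division, and the countable Pytkeev network both to replace the uncountable family of neighborhood-tests by countably many and to trap infinitely many fan points inside each. The delicate bookkeeping is to ensure that a \emph{single} final selection defeats every neighborhood of $e$ simultaneously (rather than one at a time), and this is where the local $k$-network structure of $\N$ from Proposition~\ref{vazlyvo} is used to organize the recursion. Once countable fan open-tightness at $e$ is secured, the rectifiable metrizability criterion closes the proof.
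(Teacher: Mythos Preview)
Your reduction to countable fan open-tightness at $e$ is a legitimate alternative route, but the sketch has a real gap at the step you yourself flag as the hard one. When narrowness puts infinitely many $a_n$ inside a translate $aV$ and you transport by left division $z\mapsto a\backslash z$, the images $a\backslash a_n$ lie in $V$ but are no longer members of the sets $A_n$. If the Pytkeev network then traps infinitely many of these images inside some $N\subset V$, translating back places the original points $a_n$ in $aN\subset aV$ --- a neighborhood of $a$, not of $e$. So the sentence ``translating back then shows that infinitely many of the selected points already lie in that neighborhood'' does not deliver accumulation at $e$. Nothing in the remaining description (the diagonal recursion, the local $k$-network bookkeeping) explains how to choose a \emph{single} family $F_n\subset A_n$ meeting every neighborhood of $e$, without already possessing a countable base there --- which is precisely the conclusion you are after.

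The paper avoids fan open-tightness altogether and proves first countability at $e$ directly by a short dichotomy. Take the Pytkeev network $\N$ to consist of closed sets containing $e$, and split off the nowhere-dense members $\N'=\{N_m'\}_{m\in\w}$. Using nowhere density, inductively choose $a_n\in W\setminus\bigcup_{k,m<n}a_kN_m'$. Narrowness of $W$ then forces $B=\{a_k^{-1}a_n:k<n\}$ to accumulate at $e$ (for each neighborhood $U$ some $a_k$ has $A\cap a_kU$ infinite, whence $a_k^{-1}a_m\in U$ for infinitely many $m>k$), while by construction each $N_m'$ meets $B$ only finitely. Hence $\N'$ alone cannot witness the Pytkeev property for the accumulating set $B$, so every neighborhood of $e$ must contain some $N\in\N\setminus\N'$; such $N$ has nonempty interior, and continuity of left division at $(e,e)$ makes $\{N^{-1}N:N\in\N\setminus\N'\}$ a countable neighborhood base at $e$. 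Gul'ko's theorem then gives metrizability. Note that narrowness is invoked exactly once, but on the \emph{divided} points $a_k^{-1}a_n$ --- which are the objects one actually needs to accumulate at $e$ --- rather than on a raw selection from the $A_n$.
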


\begin{proof} The ``only if'' part is trivial. To prove the ``if'' part, assume that $X$ is a locally narrow topological lop with the strong Pytkeev property. Then there exists a neighborhood $W$ of the unit $e$ in $X$ such any for any neighborhood $U\subset X$ of $e$, each infinite set $A\subset W$ contains a point $a\in A$ such that $A\cap aU$ is infinite.

Fix any countable Pytkeev network $\mathcal N$ at the unit $e$ of $X$.
We lose no generality assuming that each set $N\in\mathcal N$ is closed in $X$ and contains the point $e$.
Consider the subfamily $\mathcal N'=\{N\in \mathcal N: N$ is nowhere dense in $G\}$ and let $\mathcal N'=\{N_k'\}_{k\in\w}$ be an enumeration of $\mathcal N'$. Use the nowhere density of the sets $N_k'$ to construct a sequence $A=\{a_k\}_{k\in\w}\subset W$ such that $a_n\in W\setminus\bigcup_{k,m<n}a_kN_m'$ for every $n\in\w$. We claim that the set $B=\{a_k^{-1}a_n:k<n\}\subset X$ accumulates at $e$. Indeed, given any neighborhood $U\subset X$ of $e$, we can find a point $a_k\in A$ such that the set  $A\cap a_kU$ is infinite. Then for every point $a_m\in A\cap a_kU$ with $m>k$ we get $a_k^{-1}a_m\in U\cap B$, which means that $B$ accumulates at $e$.
Observe that for any $m\in\w$ the set $B\cap N'_m\subset \{a_k^{-1}a_n:k<n\le m\}$ is finite.
This implies that the family $\mathcal N'=\{N_m'\}_{m\in\w}$ is not a Pytkeev network at $e$.
Taking into account that $\mathcal N$ is a Pytkeev network at $e$, we conclude that each neighborhood $U\subset X$ of $e$ contains a set $N\in\mathcal N\setminus \mathcal N'$. Observe that each (closed) set $N\in\mathcal N\setminus\mathcal N'$ has non-empty interior and hence $N^{-1}N$ is a neighborhood of $e$. Then $\mathcal B_e=\{N^{-1}N:N\in\mathcal N\setminus\mathcal N'\}$ is a countable neighborhood base at $e$. So, $X$ is first countable at $e$. By \cite{Gul} the topological lop $X$, being rectifiable and first countable, is metrizable.
\end{proof}

\begin{corollary}\label{c:lnlop} A locally narrow topological lop $X$ is metrizable if and only if $X$ contains a dense sublop with the strong Pytkeev property.
\end{corollary}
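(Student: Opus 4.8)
The plan is to apply the metrization criterion of Theorem~\ref{t:lnlop} to the dense sublop and then to transfer first countability from that sublop to the whole space, using density and regularity. The ``only if'' part is immediate: a metrizable $X$ is first countable, hence has the strong Pytkeev property by the diagram of Section~\ref{s1}, and $X$ is a dense sublop of itself.

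For the ``if'' part, suppose $H\subset X$ is a dense sublop with the strong Pytkeev property. First I would note that $H$, being a sublop of the locally narrow lop $X$, is itself locally narrow (as observed in Section~\ref{s5}); Theorem~\ref{t:lnlop} then gives that $H$ is metrizable, and in particular first countable at the common unit $e\in H$. I would fix a decreasing countable base $(V_n)_{n\in\w}$ of open neighborhoods of $e$ in $H$ and write $V_n=U_n\cap H$ with each $U_n$ open in $X$.

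The main step is to show that the ambient closures $\{\cl_X(V_n):n\in\w\}$ form a countable neighborhood base at $e$ in $X$; first countability at $e$, together with the topological homogeneity of the lop $X$, then yields first countability of $X$ everywhere, and Gulko's theorem \cite{Gul} (a first countable rectifiable space is metrizable) finishes the proof. Density is used twice here. Since $U_n$ is open and $H$ is dense, $V_n=U_n\cap H$ is dense in $U_n$, so $U_n\subset\cl_X(V_n)$ and each $\cl_X(V_n)$ is genuinely a neighborhood of $e$. Conversely, given any neighborhood $O$ of $e$, regularity of $X$ supplies an open $O'$ with $\cl_X(O')\subset O$; then $O'\cap H$ is a neighborhood of $e$ in $H$ and hence contains some $V_n$, so $V_n\subset O'$ forces $\cl_X(V_n)\subset\cl_X(O')\subset O$.

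The one delicate point I anticipate is a compatibility subtlety inside the main step: the inclusion $V_n\subset O'$ does not obviously give $U_n\subset O'$, yet I also claim $U_n\subset\cl_X(V_n)\subset O$. I would resolve this by observing that the open set $U_n\setminus\cl_X(O')$, were it nonempty, would meet the dense set $H$ at a point of $U_n\cap H=V_n$ lying outside $O'$, contradicting $V_n\subset O'$; hence $U_n\subset\cl_X(O')$ and the two containments are consistent. Everything else is routine.
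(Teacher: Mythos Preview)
Your proof is correct and follows essentially the same route as the paper: apply Theorem~\ref{t:lnlop} to the dense sublop $H$ to get first countability at $e$, transfer this to $X$ via density and regularity, and finish with Gul'ko's theorem. The paper compresses the density/regularity transfer into a single sentence, so your explicit argument with the closures $\cl_X(V_n)$ is a welcome elaboration; note, however, that your ``delicate point'' is not actually delicate---from $V_n\subset O'$ monotonicity of closure gives $\cl_X(V_n)\subset\cl_X(O')\subset O$, and you already have $U_n\subset\cl_X(V_n)$, so no separate verification that $U_n\subset\cl_X(O')$ is needed.
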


\begin{proof} The ``only if'' part is trivial. To prove the ``if'', assume that a narrow topological lop $X$ contains a dense sublop $H$ with the strong Pytkeev property. By Theorem~\ref{t:lnlop}, the topological lop $X$ is first countable. The regularity of $X$ and the density of $H$ in $X$ implies that $X$ is first countable at the unit $e$. By \cite{Gul}, the topological lop $X$, being a first countable rectifiable space, is metrizable.
\end{proof}

For topological groups Corollary~\ref{c:lnlop} implies:

\begin{corollary}\label{c:lngrp} A locally precompact topological groups $X$ is metrizable if and only if $X$ contains a dense subgroup with the strong Pytkeev property.
\end{corollary}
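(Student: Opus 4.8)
The plan is to reduce this statement about topological groups to the criterion for topological lops already established in Corollary~\ref{c:lnlop}, by matching the two sets of hypotheses. The ``only if'' direction is immediate: if $X$ is metrizable then it is first countable, hence has the strong Pytkeev property (by the implication recorded in the diagram of Section~\ref{s1}), and $X$ is a dense subgroup of itself. So all the content lies in the ``if'' direction, and that direction will be a pure translation argument rather than a new construction.

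For the ``if'' direction I would first carry out two identifications. The first is algebraic: every topological group is a topological lop and every subgroup is in particular a sublop, so a dense subgroup $H\subset X$ with the strong Pytkeev property is automatically a dense sublop with the strong Pytkeev property. The second, and the genuinely load-bearing one, is that for topological groups local precompactness and local narrowness coincide; this is exactly the preceding proposition for groups (based on \cite[3.7.I]{ArT}). Applying that equivalence, the locally precompact topological group $X$ is seen to be a locally narrow topological lop.

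With both translations in place the conclusion drops out: $X$ is a locally narrow topological lop containing the dense sublop $H$ with the strong Pytkeev property, so Corollary~\ref{c:lnlop} applies verbatim and gives that $X$ is metrizable. This completes the proposed argument.

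I expect no serious analytic obstacle, since the real work has already been absorbed into Theorem~\ref{t:lnlop} and Corollary~\ref{c:lnlop} (which extract a countable neighbourhood base at the unit from a Pytkeev network living on a narrow neighbourhood, and then invoke Gulko's metrization theorem \cite{Gul} for first countable rectifiable spaces). The one point that merits explicit care is the correct matching of the group hypothesis (local precompactness) with the lop hypothesis (local narrowness) demanded by Corollary~\ref{c:lnlop}; once the equivalence of \cite{ArT} is cited, the specialization from lops to groups is routine.
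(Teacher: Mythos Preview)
Your proposal is correct and matches the paper's approach exactly: the paper simply states that for topological groups Corollary~\ref{c:lnlop} implies this result, relying on the preceding proposition (that for topological groups locally precompact $\Leftrightarrow$ locally narrow) to translate the hypothesis. Your write-up merely spells out this reduction in more detail than the paper does.
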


\section{Compact sets in topological groups and rectifiable spaces with the strong Pytkeev property}\label{s6}

In this section we consider some properties of countably compact subsets in topological groups (more generally, topological lops) with the strong Pytkeev property.

The following important fact was proved in \cite{BR}.

\begin{theorem} If a sequential rectifiable space $X$ has countable $\cs^*$-character, then each  countably compact subspace of $X$ is metrizable.
\end{theorem}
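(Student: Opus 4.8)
The plan is to reduce everything to the structure theorem for sequential rectifiable spaces and then to a single technical fact about $sk_\w$-spaces. First I would invoke Theorem~\ref{t:normrec}: since $X$ is a sequential rectifiable space with countable $\cs^*$-character, either $X$ is metrizable, in which case every subspace (in particular every countably compact one) is metrizable and we are done, or $X$ contains a clopen rectifiable $sk_\w$-subspace $G$. So the whole content lies in the second case, and from now on I would work there.

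Next I would exploit the homogeneity supplied by the rectifiable (equivalently, topological lop) structure via Proposition~\ref{p:rectilop}. Regarding $X$ as a topological lop with unit $e\in G$, each left shift $h_x\colon y\mapsto xy$ is a homeomorphism sending $e$ to $x$, so every translate $xG=h_x(G)$ is a clopen copy of the $sk_\w$-space $G$ that contains $x$, and these translates cover $X$. I would then check that, because $G$ is a clopen sublop (closed under the lop multiplication and its left division), the family $\{xG\}$ partitions $X$ into pairwise disjoint clopen homeomorphic copies of $G$ --- this is exactly the coset decomposition in the group case. \textbf{This partition step is the main obstacle,} since in a mere lop right translation need not be a homeomorphism; I expect to handle it as in \cite{BR}, using that a sublop is closed under left division, so that $a\in G$ forces $aG=G$, whence any two translates are either equal or disjoint.

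Granting the partition, let $K\subset X$ be countably compact. I would first argue that $K$ meets only finitely many pieces $xG$: if $K$ met infinitely many distinct pieces, picking one point of $K$ from each would give an infinite subset of $K$ whose accumulation point $p\in K$ lies in a single clopen piece $P$, but $P$ is a neighborhood of $p$ containing at most one of the chosen points, contradicting that $p$ is an accumulation point. Hence $K$ is contained in a finite union of translates, which is again an $sk_\w$-space.

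Finally I would prove the key lemma that a countably compact subspace $C$ of an $sk_\w$-space is metrizable. Writing the determining family as an increasing sequence $K_1\subset K_2\subset\cdots$ of compact metrizable sets with union the whole space, I claim that $C\subset K_n$ for some $n$. Otherwise I could choose distinct points $c_n\in C\setminus K_n$; the resulting set $D$ has finite --- hence closed --- intersection with every $K_m$, so by the $k_\w$ property $D$, and each of its subsets, is closed, making $D$ an infinite closed discrete subset of the countably compact space $C$, which is impossible. Thus $C\subset K_n$ is a subspace of a compact metrizable space and is therefore metrizable, completing the proof.
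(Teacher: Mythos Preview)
The paper does not prove this theorem; it merely records that the result is established in \cite{BR}. So there is no in-paper argument to compare yours against, and I comment only on the soundness of your outline.

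Your overall strategy---reduce via the structure Theorem~\ref{t:normrec}, cover $K$ by clopen translates of an $sk_\omega$-space, and finish with the lemma that a countably compact subspace of an $sk_\omega$-space is compact metrizable---is natural, and your $sk_\omega$-lemma is correct as written. The genuine gap is precisely where you flag it: the partition of $X$ into left translates $xG$. Two issues compound here. First, Theorem~\ref{t:normrec} promises only a clopen rectifiable $sk_\omega$-\emph{subspace}, not a sublop, so before speaking of cosets you must argue (not just assert) that $G$ can be taken to be a sublop containing the unit. Second, even granting that, the sketch ``$a\in G$ forces $aG=G$, whence any two translates are equal or disjoint'' is a group argument that tacitly uses associativity: from $xa=yb$ with $a,b\in G$ one wants $xG=yG$, but in a mere lop one cannot simplify $(xa')b$ to $x(a'b)$. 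Deferring this step to \cite{BR} is not a proof; it is exactly the point that needs work.

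You can, however, bypass the partition entirely with tools the paper itself supplies. Each $k\in K$ has the clopen neighbourhood $kG$, a homeomorphic copy of $G$; then $K\cap kG$ is clopen in $K$, hence countably compact, hence compact metrizable by your $sk_\omega$-lemma. Thus $K$ is locally metrizable, so first countable, so sequentially compact; consequently $K\times K$ is first countable and (as a product of sequentially compact spaces) countably compact. First countability yields both the Fr\'echet--Urysohn property and countable fan open-tightness at every point of $K\times K$. Now Lemma~\ref{comp1}(2) (equivalently Theorem~\ref{t:rectcs}(2)), which the paper proves directly and independently of the cited result, gives that $K$ is metrizable---no disjointness of translates required.
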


Without sequentiality we can prove a bit weaker result.

\begin{theorem}\label{t:rectcs} Let $X$ be a rectifiable space with countable $\cs^*$-character.
\begin{enumerate}
\item A compact subspace $K$ of $X$ is metrizable if and only if $K$ is sequential.
\item A countably compact subspace $K$ of $X$ is metrizable if and only if its square $K\times K$ is Fr\'echet-Urysohn and has countable fan open-tightness at each point of the diagonal $\Delta_K$.
\end{enumerate}
\end{theorem}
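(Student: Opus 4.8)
The ``only if'' implications are immediate: a metrizable space is first countable, hence Fréchet--Urysohn and of countable fan open-tightness, and all of these pass to the subspaces $K$ and $K\times K$. So in both items only the ``if'' direction requires work, and the plan is to equip $K$ with a $G_\delta$-diagonal and then invoke the classical theorem of Chaber that a countably compact space with a $G_\delta$-diagonal is compact and metrizable.

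The device producing the $G_\delta$-diagonal is the rectifiable (topological lop) structure of $X$. Writing the lop operation as $(x,y)\mapsto xy$ with unit $e$ and division $q$ (so that $q(x,w)=z\iff w=xz$), the map $q\colon X\times X\to X$ is continuous and satisfies $q(x,y)=e$ if and only if $y=x$; hence $\Delta_K=(q{\restriction}K\times K)^{-1}(e)$. Consequently $\Delta_K$ is $G_\delta$ in $K\times K$ as soon as $e$ is a $G_\delta$-point of the image $Z=q(K\times K)\subset X$: any presentation $\{e\}=\bigcap_{n}W_n$ with $W_n$ open in $Z$ pulls back to a presentation of $\Delta_K$ as a countable intersection of sets open in $K\times K$. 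So everything reduces to showing that $e$ is a $G_\delta$-point of $Z$.

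To reach that, I first manufacture first countability along the diagonal. Countable $\cs^*$-character is hereditary and, by \cite{BZd}, preserved by finite products, so $K\times K$ has countable $\cs^*$-character. In item (2) the square $K\times K$ is assumed Fréchet--Urysohn, so by the Proposition deriving the strong Pytkeev property from the Fréchet--Urysohn property together with countable $\cs^*$-character, $K\times K$ has the strong Pytkeev property at each diagonal point; being regular and of countable fan open-tightness there, it is first countable at every point of $\Delta_K$ by Proposition~\ref{first:sak}. Moreover a Fréchet--Urysohn countably compact space is sequentially compact, and sequential compactness is finitely productive, so $K\times K$, and hence its continuous image $Z$, is sequentially (thus countably) compact. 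In item (1) the compactum $K$ is only assumed sequential; here I intend to exploit that the compact Hausdorff square $K\times K$ is a $k_2$-space and feed sequentiality into condition (2) of the same Proposition to obtain again the strong Pytkeev property, and thereby first countability, along the diagonal, with $Z$ now genuinely compact.

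The remaining, and main, step is to pass from first countability of $K\times K$ along $\Delta_K$ to the single assertion that $e$ is a $G_\delta$-point of $Z=q(K\times K)$. This is precisely where the hypotheses must be combined: first countability along the diagonal by itself does not suffice (the split interval is compact and first countable yet has no $G_\delta$-diagonal), and countable $\cs^*$-character by itself does not suffice (in $\beta\IN$ every point has countable $\cs^*$-character but is not $G_\delta$). The usable structure is that each left translation $L_x\colon y\mapsto q(x,y)$ is a homeomorphism of $X$ carrying $x$ to $e$, so $L_x(K)\subset Z$ is a copy of $K$ through $e$ in which $e$ is $G_\delta$ (as $K$ is first countable at $x$); sequential compactness of $Z$ supplies convergent sequences at $e$, while the countable $\cs^*$-network at $e$ inherited from $X$ organises them, and the plan is to use these to collapse the uncountably many one-copy witnesses into a single countable family of open subsets of $Z$ meeting in $\{e\}$. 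I expect this verification of countable pseudocharacter of $Z$ at $e$ (and, in item (1), the prior upgrading of sequentiality of the compactum to the strong Pytkeev property of its square) to be the technical heart of the proof. Once it is in place, $\Delta_K$ is $G_\delta$ in $K\times K$, so $K$ carries a $G_\delta$-diagonal, and Chaber's theorem yields that $K$ is compact metrizable, completing both items.
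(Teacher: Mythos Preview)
Your outline for item (2) shares the endpoint with the paper (a $G_\delta$-diagonal plus Chaber's theorem), but the route you take leaves the decisive step as a stated hope rather than an argument. You correctly observe that Fr\'echet--Urysohn plus countable $\cs^*$-character yields the strong Pytkeev property on $K\times K$, and that together with countable fan open-tightness this gives first countability along $\Delta_K$; but, as you yourself note with the split interval, first countability along the diagonal does not by itself produce a $G_\delta$-diagonal, and your passage from ``$e$ is $G_\delta$ in each slice $L_x(K)$'' to ``$e$ is $G_\delta$ in $Z$'' is precisely the step you do not carry out. The paper never passes through first countability of $K\times K$. It argues directly by contradiction from the $\cs^*$-network $\mathcal N$ at $e$ in the target (Lemma~\ref{comp1}): if $\Delta_K$ is not $G_\delta$, then the intersection of the interiors $U_N$ of $q^{-1}(N)$ over those $N\in\mathcal N$ with $\Delta_K\subset U_N$ must contain some $(u,v)\notin\Delta_K$; setting $y=q(u,v)\ne e$ and enumerating $\{N\in\mathcal N:y\notin N\}$ as an increasing chain $(N_k)$, none of these $N_k$ can have $\Delta_K\subset U_{N_k}$, so each open set $A_k=K^2\setminus q^{-1}(N_k)$ has closure meeting $\Delta_K$, and countable compactness of $\Delta_K$ produces a single $(a,a)\in\bigcap_k\overline{A_k}$. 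Countable fan open-tightness then supplies finite $F_k\subset A_k$ with $(a,a)\in\overline{\bigcup_kF_k}$; Fr\'echet--Urysohn extracts a sequence $E\subset\bigcup_kF_k$ converging to $(a,a)$; its image $q(E)$ converges to $e$; and the $\cs^*$-network traps infinitely many terms of $q(E)$ in some $N_m$, contradicting $q(F_k)\cap N_m=\emptyset$ for $k\ge m$. So the hypotheses are consumed directly, not recycled into first countability.

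Your attempt at item (1) has separate problems. The proposition you invoke upgrades countable $k$-character, not countable $\cs^*$-character, to the strong Pytkeev property under the $k_2$-space (or sequential) hypothesis, so the compact-Hausdorff structure of $K\times K$ does not close that gap; and even granting the strong Pytkeev property, item (1) carries no fan open-tightness assumption, so Proposition~\ref{first:sak} is unavailable. The paper does not prove item (1) here; it simply cites \cite{BR}.
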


\begin{proof} The first statement of this theorem was proved in \cite{BR}. The second statement trivially follows Lemma~\ref{comp1}(2) (below) and the fact that for a topological lop $X$ and the division operation $q:X\times X\to X$, $q:(x,y)\mapsto x^{-1}y$, the preimage $q^{-1}(e)$ coincides with the diagonal $\Delta_X$ of $X\times X$.
\end{proof}

\begin{lemma}\label{comp1} Let $X$ be a countably compact regular space such that the square $X\times X$ has countable fan open-tightness at each point of the diagonal $\Delta_X=\{(x,y)\in X\times X:x=y\}$, and $q:X\times X\to Y$ be a continuous map into a topological $T_1$-space $Y$ such that $\Delta_X=q^{-1}(e)$ for some point $e\in Y$. The space $X$ is metrizable if one of the following conditions is satisfied:
\begin{enumerate}
\item $Y$ has the strong Pytkeev property at $e$;
\item $X\times X$ is Fr\'echet-Urysohn at each point of $\Delta_X$ and $Y$ has countable $\cs^*$-character at $e$.
\end{enumerate}
\end{lemma}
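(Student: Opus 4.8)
The plan is to prove Lemma~\ref{comp1} by reducing the metrizability of $X$ to showing that $X$ is first countable at each point, which for a countably compact regular space (together with a suitable network condition) will give metrizability. The key structural observation is that the map $q:X\times X\to Y$ with $q^{-1}(e)=\Delta_X$ lets us transport the local topological information available at the single point $e\in Y$ to the \emph{entire} diagonal $\Delta_X$, and hence to every point $x\in X$ via the diagonal point $(x,x)$.

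\medskip

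\textbf{First step: transport a local network from $e$ to $\Delta_X$.} Assume condition (1), so $Y$ has the strong Pytkeev property at $e$; fix a countable Pytkeev network $\mathcal N$ at $e$. I would pull this back along $q$: for each $N\in\mathcal N$ set $\widetilde N=q^{-1}(N)\subset X\times X$. The family $\widetilde{\mathcal N}=\{\widetilde N:N\in\mathcal N\}$ is countable, and since $q^{-1}(e)=\Delta_X$ and $Y$ is $T_1$, a subset $A\subset X\times X$ accumulating at a point $(x,x)\in\Delta_X$ maps under $q$ to a set accumulating at $e$ (using continuity of $q$ and that $(x,x)\notin\overline{A\setminus\Delta_X}$ would have to fail). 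The plan is to verify that $\widetilde{\mathcal N}$ is a Pytkeev network at each diagonal point $(x,x)$: given a neighborhood $O$ of $(x,x)$ and a set $A$ accumulating there, the $q$-image $q(A)$ accumulates at $e$, and the Pytkeev property at $e$ yields $N\in\mathcal N$ with $N\subset q(O)$-type control and $N\cap q(A)$ infinite, whose preimage gives the desired $\widetilde N$. Thus $X\times X$ has the strong Pytkeev property at each point of $\Delta_X$.

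\medskip

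\textbf{Second step: apply Proposition~\ref{first}.} Now $X\times X$ has, at each diagonal point, both the strong Pytkeev property (just established) and countable fan open-tightness (hypothesis). For a \emph{regular} space the combination of the strong Pytkeev property with countable fan open-tightness yields first countability by Proposition~\ref{first:sak}. Hence $X\times X$ is first countable at each point $(x,x)\in\Delta_X$, which forces $X$ itself to be first countable at each $x\in X$. A first countable countably compact regular space is metrizable (a first countable countably compact space is sequentially compact, and a regular first countable space with the relevant compactness is metrizable by standard criteria), giving the conclusion. Under condition (2), the same transport argument produces a countable $\cs^*$-network on $X\times X$ at each diagonal point, and combined with the Fr\'echet--Urysohn hypothesis there, the already-quoted Proposition (countable $\cs^*$-character $+$ Fr\'echet--Urysohn $\Rightarrow$ strong Pytkeev) recovers the strong Pytkeev property at $\Delta_X$, after which the first-countability/metrization argument proceeds identically.

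\medskip

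\textbf{Expected main obstacle.} The delicate point is the transport step: I must check that accumulation at a diagonal point $(x,x)$ in $X\times X$ is faithfully reflected by accumulation at $e$ in $Y$, and that a Pytkeev set $N\subset Y$ controlling $q(A)$ pulls back to a set $\widetilde N\subset X\times X$ lying inside the \emph{given} neighborhood $O$ of $(x,x)$, not merely inside $q^{-1}(\text{neighborhood of }e)$. Since $q$ need not be open, the inclusion $\widetilde N\subset O$ cannot be read off directly from $N\subset$ (some neighborhood of $e$); I expect to handle this by shrinking $O$ to a basic box neighborhood and using the continuity of $q$ together with the $T_1$-separation of $e$ from the relevant image points, so that the Pytkeev network at $e$ can be chosen to refine the image of $O$. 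Managing this compatibility between the pulled-back neighborhood structure and the original product topology on $X\times X$ is the technical heart of the proof; everything else is an assembly of the cited Propositions.
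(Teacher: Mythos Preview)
Your transport step cannot work as stated, and the final metrization step is false.

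\textbf{The transport fails.} The family $\widetilde{\mathcal N}=\{q^{-1}(N):N\in\mathcal N\}$ is never a Pytkeev network at a diagonal point $(x,x)$, because its members are global ``tubes'' around the diagonal, not local sets. If $e\in N$ then $q^{-1}(N)\supset\Delta_X$, so $q^{-1}(N)$ cannot lie inside a basic product neighborhood $U\times U$ of $(x,x)$ unless $U=X$; and even when $e\notin N$, the set $q^{-1}(N)$ has no reason whatsoever to sit inside a small box around one particular diagonal point. You acknowledge this obstacle, but the proposed fix (``shrink $O$ to a box and use continuity of $q$'') does nothing: continuity of $q$ only says that $q^{-1}(\text{neighborhood of }e)$ is a neighborhood of the whole diagonal, not that it is small near $(x,x)$. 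There is simply no way to manufacture, from a countable network at the single point $e$, a countable family that is a Pytkeev network at \emph{every} diagonal point by pullback alone.

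\textbf{The metrization step is false.} Your claim ``a first countable countably compact regular space is metrizable'' fails: $[0,\omega_1)$ is first countable, countably compact, Tychonoff, and not metrizable. So even if you reached first countability of $X$, you would not be done.

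\textbf{What the paper does instead.} The paper argues by contradiction via Chaber's theorem (countably compact regular $+$ $G_\delta$-diagonal $\Rightarrow$ metrizable). Assuming $\Delta_X$ is not $G_\delta$, one looks at the interiors $U_N$ of the sets $q^{-1}(N)$ and finds a point $(u,v)\notin\Delta_X$ in $\bigcap\{U_N:\Delta_X\subset U_N\}$. Setting $y=q(u,v)\neq e$ and enumerating those $N\in\mathcal N$ that miss $y$, one uses countable compactness of $\Delta_X$ together with countable fan open-tightness at a suitable diagonal point to build a set whose $q$-image accumulates at $e$ but evades the Pytkeev (resp.\ $\cs^*$-) network, a contradiction. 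The key conceptual difference from your plan is that the pulled-back sets $q^{-1}(N)$ are used as candidate neighborhoods of the \emph{entire diagonal} (aiming at the $G_\delta$-diagonal condition), not as a local network at individual diagonal points.
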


\begin{proof} Assuming that the space $X$ is not metrizable we shall prove that the conditions (1) and (2) are not satisfied. By Chaber's Theorem \cite[2.14]{Gru}, each countably compact regular space with  $G_\delta$-diagonal is metrizable. So, $X$ does not have $G_\delta$-diagonal.

Let $\mathcal N$ be a countable family of closed subsets, closed under finite unions. We shall assume that $\mathcal N$ is a $\cs^*$-network at $e$ if $Y$ has countable $\cs^*$-character at $e$, and  $\mathcal N$ is a Pytkeev network at $e$ if $Y$ has the strong Pytkeev property at $e$.

For every $N\in\mathcal N$ consider the interior $U_N$ of the set $q^{-1}(N)$ in $X\times X$ and let $\mathcal N'=\{N\in\mathcal N:\Delta_X\subset U_N\}$. Since the diagonal $\Delta_X$ is not a $G_\delta$-set in $X\times X$, the intersection $\bigcap_{N\in\mathcal N'}U_N$ contains some pair $(u,v)\notin\Delta_X$.

Now consider the point $y=q(u,v)\ne e$ and let $\mathcal N_y=\{N\in\mathcal N:y\notin N\}$. Since the family $\mathcal N_y$ is countable and closed under finite unions, there is an increasing sequence of sets $\{N_k\}_{k\in\w}\subset\mathcal N_y$ such that each set $N\in\mathcal N_y$ is contained in some set $N_k$.

The choice of the point $(u,v)\in\bigcap_{N\in\mathcal N'}U_N\subset\bigcap_{N\in\mathcal N'}q^{-1}(N)$ guarantees that for every $k\in\w$ the set $N_k$ does not belong to the subfamily $\mathcal N'$ and hence $\Delta_X\not\subset U_{N_k}$, which implies that the closure $\bar A_k$ of the open set $A_k=X^2\setminus q^{-1}(N_k)=q^{-1}(Y\setminus N_k)$ meets the diagonal $\Delta_X$.
Then $(\Delta_X\cap\bar A_k)_{k\in\w}$ is a decreasing sequence of
non-empty closed subset of the diagonal $\Delta_X$. By the countable compactness of $\Delta_X$, the intersection $\bigcap_{k\in\w}\Delta_X\cap\bar A_k$ contains some point $(a,a)$.

By the countable fan open-tightness of $X\times X$ at the point $(a,a)$, in each open set $A_k$ we can choose a finite subset $F_k\subset A_k\subset q^{-1}(Y\setminus N_k)$ so that the union $F=\bigcup_{k\in\w}F_k$ contains the point $(a,a)$ in its closure. By the continuity of $q$, the set $q(F)=\bigcup_{k\in\w}q(F_k)\subset \bigcup_{k\in\w}(Y\setminus N_k)$ contains the point $e$ in its closure. Now we consider two cases.
\smallskip

1. The space $Y$ has the strong Pytkeev property at $e$. By our assumption, the family $\mathcal N$ is a Pytkeev network at $e$. So, we can find a set $N\in\mathcal N_y$ such that $N\cap q(F)$ is infinite. The choice of the sequence $(N_k)_{k\in\w}$ guarantees that $N\subset N_m$ for some $m$ and then $$N_m\cap q(F)=\bigcup_{k\in\w}N_m\cap q(F_k)=\bigcup_{k\in\w}N_m\cap(q(F_k)\setminus N_k)\subset \bigcup_{k<m}q(F_k)$$ is finite, which is a contradiction.
\smallskip

2. The space $X\times X$ is Fr\'echet-Urysohn at each point of $\Delta_X$ and $Y$ has countable $\cs^*$-character at $e$. Since the space $X\times X$ is Fr\'echet-Urysohn at the point $(a,a)$,
we can find an infinite subset $E\subset F$ which converges to $(a,a)$ in the sense that each neighborhood $O_{(a,a)}\subset X\times X$ of $(a,a)$ contains all but finitely many point of the infinite set $E$. By the continuity of $q$ the image $q(E)$ is a sequence convergent to the point $e$.
By our assumption, the family $\mathcal N$ is a $\cs^*$-network at $e$. So, it contains a set $N\in\mathcal N$ such that $N\subset Y\setminus\{y\}$ and $N\cap q(E)$ is infinite. By the choice of the sequence $(N_k)_{k\in\w}$, the set $N\in\mathcal N_y$  is contained in some set $N_m$ and then the intersection
$$N\cap q(E)\subset N_m\cap q(F)\subset \bigcup_{k< m}q(F_k)$$ is finite, which is a desirable contradiction.
\end{proof}

Lemma~\ref{comp1}(1) implies the following metrizability criterion.

\begin{theorem}\label{t:comp1} Let $X$ be rectifiable space with the strong Pytkeev property. A countably compact subspace $K$ of $X$ is metrizable if and only if $K\times K$ has countable fan open-tightness at each point of the diagonal $\Delta_K$.
\end{theorem}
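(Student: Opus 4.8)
The plan is to deduce this theorem directly from Lemma~\ref{comp1}(1), exactly as Theorem~\ref{t:rectcs}(2) was deduced from Lemma~\ref{comp1}(2). The ``only if'' implication is routine: a metrizable space $K$ has metrizable square $K\times K$, which is therefore first countable and hence has countable fan open-tightness at every point, in particular at each point of $\Delta_K$. So I would spend all the effort on the ``if'' implication, for which the key is to manufacture a continuous map out of $K\times K$ that collapses precisely the diagonal to a single point of a space with the strong Pytkeev property.

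To this end I would first invoke Proposition~\ref{p:rectilop} to replace $X$ by a homeomorphic copy carrying the structure of a topological lop with unit $e$. As noted in the proof of Theorem~\ref{t:rectcs}, the division operation $q:X\times X\to X$, $q:(x,y)\mapsto x^{-1}y$ (the unique continuous operation with $p(x,q(x,y))=y$, where $p$ is the lop multiplication), is continuous and satisfies $q^{-1}(e)=\Delta_X$, since $q(x,y)=e$ holds precisely when $y=p(x,e)=x$. Restricting $q$ to the subsquare $K\times K$ then yields a continuous map into $X$ whose fiber over $e$ is exactly the diagonal $\Delta_K$.

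Now I would apply Lemma~\ref{comp1}(1), taking the countably compact regular space there to be $K$, the map there to be $q|_{K\times K}$, and the target $T_1$-space to be $Y=X$. The hypotheses are readily checked: $K$ is countably compact by assumption and regular as a subspace of the regular space $X$; the square $K\times K$ has countable fan open-tightness at each point of $\Delta_K$ by hypothesis; the restriction $q|_{K\times K}$ is continuous with $(q|_{K\times K})^{-1}(e)=\Delta_K$; and $X$ is a $T_1$-space (being regular and $T_0$) that has the strong Pytkeev property at $e$, so condition~(1) of the lemma is satisfied. The lemma then gives metrizability of $K$, completing the ``if'' direction.

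Since the substantive work has already been carried out inside Lemma~\ref{comp1}(1), I do not expect a genuine obstacle here; the only points requiring care are the reduction to a topological lop and the verification that the division map sends a pair to the unit exactly when its two coordinates coincide, so that the diagonal is recovered as the fiber over $e$. These are the same bookkeeping steps already used to prove Theorem~\ref{t:rectcs}(2).
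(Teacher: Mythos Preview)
Your proposal is correct and follows exactly the paper's approach: the paper states that the theorem is an immediate consequence of Lemma~\ref{comp1}(1), and your argument spells out precisely the reduction (via Proposition~\ref{p:rectilop} and the division map $q$ with $q^{-1}(e)=\Delta_X$) that the paper leaves implicit, having already used it in the proof of Theorem~\ref{t:rectcs}(2).
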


As an application of Theorem~\ref{t:rectcs}(2) we get:

\begin{corollary} The countably compact space $[0,\w_1)$ is first countable but does not embed into a rectifiable space with countable $\cs^*$-character.
\end{corollary}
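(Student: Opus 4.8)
The plan is to argue by contradiction using Theorem~\ref{t:rectcs}(2). First I would record the two standard facts about the ordinal space $K=[0,\w_1)$: it is first countable (a successor ordinal is isolated, while at a limit ordinal $\alpha$ of countable cofinality the tails $(\beta,\alpha]$ indexed by a cofinal $\w$-sequence form a countable neighborhood base), and it is countably compact (any infinite subset contains a strictly increasing $\w$-sequence whose supremum is a countable ordinal, hence an accumulation point). I would also note that $K$ is \emph{not} metrizable, since a countably compact metrizable space is compact, whereas $[0,\w_1)$ is not compact (the cover $\{[0,\alpha):\alpha<\w_1\}$ admits no finite subcover).

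Now suppose, toward a contradiction, that $K$ embeds as a subspace of some rectifiable space $X$ with countable $\cs^*$-character. Since countable compactness is a topological property, the image of $K$ is a countably compact subspace of $X$, so Theorem~\ref{t:rectcs}(2) applies to it. The crux is then to verify the hypotheses of that theorem for $K$: that $K\times K$ is Fr\'echet-Urysohn and has countable fan open-tightness at each point of the diagonal $\Delta_K$. Here I would exploit that the product of two first countable spaces is again first countable, so $K\times K=[0,\w_1)^2$ is first countable at every point. First countability immediately yields the Fr\'echet-Urysohn property, and it also yields countable fan tightness: given a decreasing neighborhood base $(U_n)_{n\in\w}$ at a point and sets $A_n$ each having that point in its closure, one picks $a_n\in A_n\cap U_n$ and sets $F_n=\{a_n\}$, so that every neighborhood, containing some $U_m$, meets $F_n$ for all $n\ge m$. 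In particular $K\times K$ has countable fan open-tightness at every point of $\Delta_K$, and both hypotheses of Theorem~\ref{t:rectcs}(2) are satisfied.

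By Theorem~\ref{t:rectcs}(2) it would then follow that $K=[0,\w_1)$ is metrizable, contradicting the non-metrizability recorded above; this contradiction shows that no such embedding exists. I do not expect any single step to present a genuine obstacle, as this is an application of the theorem already in hand. The only point requiring care is matching its hypotheses exactly: the required local properties are needed along the diagonal of the \emph{square}, so the feature doing the real work is the first countability of $K\times K$ at every point (which follows from first countability of $K$), rather than any property of $K$ in isolation that might fail to survive squaring.
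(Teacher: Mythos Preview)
Your proposal is correct and is precisely the application of Theorem~\ref{t:rectcs}(2) that the paper intends; the paper itself gives no further proof beyond ``As an application of Theorem~\ref{t:rectcs}(2) we get\ldots''. Your verification that first countability of $K$ lifts to first countability of $K\times K$, hence to the Fr\'echet--Urysohn property and countable fan (open-)tightness at every point of the diagonal, is exactly the routine check needed to invoke that theorem and reach the contradiction with non-metrizability of $[0,\w_1)$.
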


\end{document}